\documentclass[reqno, 10pt]{amsart}
%\usepackage[left=1.03in, right=1.03in, top=1.05in, bottom=1.05in]{geometry}  
%%%%%%%%%%%%%%%%%%%%%%%%%%%
\usepackage{todonotes}
\presetkeys{todonotes}{inline}{}
\makeatletter
\providecommand\@dotsep{5}
\makeatother
%%%%%%%%%%%%%%%%%%%%%%%%%%%
\usepackage{amssymb, amsmath, amsthm, enumerate,  marginnote, mathabx, mathrsfs, mathtools, tikz, bm, bbm}
\usetikzlibrary{arrows,calc}
\usetikzlibrary{arrows.meta, cd, patterns}
\usepackage{bbm} %for \blackboard bold 1
\usepackage{esint} % Average integral
\usepackage{verbatim}

\usepackage{hyperref}
\hypersetup{
    colorlinks=true,
    linkcolor=blue,
    filecolor=magenta,
}

\usepackage[normalem]{ulem}

\usetikzlibrary{decorations.pathreplacing}
\usetikzlibrary{decorations,calligraphy}

\usepackage{todonotes}

\usepackage{arydshln, multirow} %for table

\usepackage{comment}
\theoremstyle{plain}
\numberwithin{equation}{section}
\newtheorem{theorem}{Theorem}[section]
\newtheorem{corollary}[theorem]{Corollary}
\newtheorem{lemma}[theorem]{Lemma}
\newtheorem{proposition}[theorem]{Proposition}

\theoremstyle{definition}

\newtheorem*{induction hypothesis}{Induction hypothesis}

\theoremstyle{remark}
\newtheorem*{remark}{Remark}

%\newcommand{\todo}[1]{{\bf\color{red}{[ToDo] #1}}}

%%%%%%%%%%%%%%%%%%%%%%%%%%%%%%%%%%%%%%%%%%%%%%%%%%%%%%%%%%%%%%%%%%%%%%%%%%%%%%%%%

\newcommand{\R}{\mathbb{R}}
\newcommand{\Z}{\mathbb{Z}}

\def\bbone{{\mathbbm 1}}

\newcommand{\Be}{\begin{equation}}
\newcommand{\Ee}{\end{equation}}

\newcommand{\Bm}{\begin{multline}}
\newcommand{\Em}{\end{multline}}
\newcommand{\Bea}{\begin{eqnarray}}
\newcommand{\Eea}{\end{eqnarray}}
\newcommand{\Beas}{\begin{eqnarray*}}
\newcommand{\Eeas}{\end{eqnarray*}}
\newcommand{\Benu}{\begin{enumerate}}
\newcommand{\Eenu}{\end{enumerate}}
\newcommand{\Bi}{\begin{itemize}}
\newcommand{\Ei}{\end{itemize}}

\def\intslash{\rlap{\kern  .32em $\mspace {.5mu}\backslash$ }\int}
\def\qsl{{\rlap{\kern  .32em $\mspace {.5mu}\backslash$ }\int_{Q_x}}}

\def\emph#1{{\it #1 }}

%\def\itemize#1{\item"{#1}"}

%\def\th{\Theta}

%\def\prd{{\text{\rm prod}}}

%\def\supp{{\text{\rm supp}}}

%Greek letters

%Attn:\fi can't be defined.

\begin{document}

\title[Young's convolution inequality on the hypercube]{
Optimal Young's convolution inequality and its reverse form on the hypercube}

\author{David Beltran}
\address{(D.B.) Departament d’An\`alisi Matem\`atica, Universitat de Val\`encia, Dr. Moliner 50, 46100 Burjassot, Spain}
\email{david.beltran@uv.es}

\author{Paata Ivanisvili}
\address{(P.I.) Department of Mathematics, University of California, 
Irvine, CA 92617, USA}
\email{pivanisv@uci.edu}

\author{Jos\'e Madrid}
\address[(J.M.)]{Department of  Mathematics, Virginia Polytechnic Institute and State University,  225 Stanger Street, Blacksburg, VA 24061-1026, USA
}
\email{josemadrid@vt.edu}

\author[Lekha Patil]{Lekha Patil}
\address{(L.P.) Department of Mathematics, University of California, 
Irvine, CA 92617, USA}
\email{patill@uci.edu}

%\onehalfspace
%    \subjclass is required.
\subjclass[2020]{39A12, 26D15, 11B30, 11B13}
\keywords{convolution inequalities, hypercube, additive energies, sumsets}
%    The 2010 edition of the Mathematics Subject Classification is
%    now available.  If you are citing a classification from the
%    new scheme, use the following input coding instead.
%\subjclass[2010]{Primary }

%    Abstract is required.
%\maketitle
\begin{abstract}
We establish sharp forms of Young's convolution inequality and its reverse on the discrete hypercube $\{0,1\}^d$ in the diagonal case $p = q$. As applications, we derive bounds for additive energies and sumsets. We also investigate the non-diagonal regime $p \neq q$, providing necessary conditions for the inequality to hold, along with partial results in the case $r = 2$.

%Sharp Young's convolution inequality and its reverse forms on $\{0,1\}^d$ are established in the diagonal case $p=q$. Applications to additive energies and sumset bounds are provided. We also explore the non-diagonal case $p \neq q$ and provide necessary conditions, and partial results for $r=2$.
\end{abstract}

\maketitle

\section{Introduction}

Let $d \geq 1$. In this paper we are interested in sharp versions of Young's convolution inequality (and its reverse version) in $\Z^d$ for functions supported on the hypercube $\{0,1\}^d$, which will yield applications in the theory of additive energies and sum-set bounds in $\{0,1\}^d$.

\subsection{Refined Young's convolution inequality}
Given $r \geq 1$, the classical Young's convolution inequality says that 
\begin{equation}\label{goal-inequality}
    \lVert f * g \rVert_{\ell^r(\Z^d)} \leq \lVert f \rVert_{\ell^p(\Z^d)} \, \lVert g \rVert_{\ell^q(\Z^d)}
\end{equation}
holds for all $f, \; g: \Z^d \to \R$ if $1 \leq p,q \leq \infty$ satisfy $1+\frac{1}{r}=\frac{1}{p}+\frac{1}{q}$. We are interested in improving the exponents $p$ and $q$ under the additional assumption that $f$ and $g$ are supported in $\{0,1\}^d$.

We start discussing the diagonal case $p=q$, for which we have a sharp result. 
\begin{theorem}\label{thm: f g r p p}
    Let $r\geq 1$ and $p_r:=\frac{2r}{\log_2(2+2^r)}$. Then
\begin{equation}\label{ineq: f*g r p p}
\|f*g\|_r\leq \|f\|_{p_r} \|g\|_{p_r}
\end{equation}
holds for all $f,g:\Z^d\to \R$ supported in $\{0,1\}^d$. Moreover, the exponent $p_r$ cannot be replaced by any larger real number.
\end{theorem}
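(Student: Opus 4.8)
The plan is to treat the two assertions separately: the sharpness of $p_r$ is elementary, while the inequality \eqref{ineq: f*g r p p} follows by induction on the dimension $d$, which isolates a single two-variable inequality as the genuine obstacle.

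For sharpness, I would test $f=g=\bbone_{\{0,1\}^d}$. Since this is a tensor product, the convolution factorises across coordinates, $(f*g)(x)=\prod_{i=1}^d c(x_i)$ with $c(0)=c(2)=1$ and $c(1)=2$, so that $\|f*g\|_r=(2+2^r)^{d/r}$ while $\|f\|_{p}\|g\|_{p}=2^{2d/p}$. Hence \eqref{ineq: f*g r p p} forces $\tfrac1r\log_2(2+2^r)\le \tfrac2p$, i.e. $p\le p_r$, and at $p=p_r$ equality holds for every $d$; thus no larger exponent is admissible.

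For the inequality itself, I would peel off the last coordinate. Writing $f_\eps(x')=f(x',\eps)$ and $g_\eps(x')=g(x',\eps)$ for $\eps\in\{0,1\}$, $x'\in\{0,1\}^{d-1}$, the restriction of the supports to $\{0,1\}$ in the last slot gives
\[
\|f*g\|_r^r=\|f_0*g_0\|_r^r+\|f_0*g_1+f_1*g_0\|_r^r+\|f_1*g_1\|_r^r,
\]
corresponding to the three possible values $0,1,2$ of the last coordinate of $f*g$. Applying Minkowski's inequality (legitimate since $r\ge1$) to the middle term, and the inductive hypothesis in dimension $d-1$ to each factor $\|f_i*g_j\|_r\le\|f_i\|_{p}\|g_j\|_{p}=:a_ib_j$, the estimate reduces to the two-point inequality
\[
(a_0b_0)^r+(a_0b_1+a_1b_0)^r+(a_1b_1)^r\le (a_0^p+a_1^p)^{r/p}(b_0^p+b_1^p)^{r/p}
\]
for all $a_i,b_j\ge0$, with $p=p_r$; taking $r$-th roots then closes the induction, the base case $d=0$ being trivial as $f*g=fg$ there.

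The two-point inequality is where the real work lies. Using homogeneity I would assume $a_0,b_0>0$, divide by $(a_0b_0)^r$, and set $u=a_1/a_0,\ v=b_1/b_0\ge0$, turning it into
\[
1+(u+v)^r+(uv)^r\le (1+u^p)^{r/p}(1+v^p)^{r/p}.
\]
The very definition of $p_r$ is the requirement that equality hold at the symmetric point $u=v=1$, where both sides equal $2+2^r$; moreover the inequality is invariant under $u\leftrightarrow v$ and under $(u,v)\mapsto(u^{-1},v^{-1})$, whose common fixed point is $(1,1)$. On the boundary $u=0$ it collapses to $1+v^r\le(1+v^p)^{r/p}$, which holds because $r/p=\tfrac12\log_2(2+2^r)\ge1$ makes $t\mapsto t^{r/p}$ superadditive; the corner and $v=0$ cases are identical. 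The remaining, and hardest, step is to show that $(1,1)$ is the global maximiser of the ratio in the open quadrant. My preferred route is to exploit the two symmetries to argue that the supremum is attained on the diagonal $u=v=:w$, reducing matters to the one-variable inequality $1+(2w)^r+w^{2r}\le(1+w^p)^{2r/p}$, and then to verify this by a derivative/convexity analysis anchored at the equality point $w=1$. Justifying the reduction to the diagonal — equivalently, ruling out off-diagonal interior critical points — is the delicate part, and it is here that the precise value of $p_r$ enters quantitatively rather than merely through the first-order condition at $(1,1)$.
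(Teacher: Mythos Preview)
Your plan matches the paper's proof almost step for step: the induction/tensorisation argument is the paper's Lemma~2.1, the reduction to the diagonal is Proposition~3.2, and the one-variable inequality on the diagonal is Lemma~3.1. The only substantive gap you flag---ruling out off-diagonal interior extrema---is exactly where the paper does the real work, and it does so not via the two symmetries abstractly but by showing that $x\partial_x H - y\partial_y H$ is nonzero off the diagonal (a contradiction argument leading to a polynomial inequality in $t=y/x$), which forces $H(x,y)\le H(\sqrt{xy},\sqrt{xy})$ along each hyperbola $xy=\mathrm{const}$; this is precisely the ``reduction to $u=v$'' you anticipated, carried out quantitatively.
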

The sharpness of $p_r$ can be seen from taking $f=g=\mathbbm{1}_{\{0,1\}^d}$. Note that the classical Young's convolution inequality provides \eqref{ineq: f*g r p p} with $p_r$ replaced by $\frac{2r}{r+1}$, which is a smaller number for $r>1$.

Theorem \ref{thm: f g r p p} immediately implies sharp off-diagonal bounds for $f=g$ via Hölder's inequality on the right-hand side of \eqref{ineq: f*g r p p}.

\begin{corollary}\label{cor:f f pqr}
Let $r\geq 1$ and $1 \leq p,q \leq \infty$ satisfying $\frac{1}{p} + \frac{1}{q}=\frac{\log_2(2+2^r)}{r}$. Then
\begin{equation}\label{ineq: f*f r p q}
\|f*f\|_r\leq \|f\|_{p} \|f\|_q
\end{equation}
holds for all $f,g:\Z^d\to \R$ supported in $\{0,1\}^d$.   
\end{corollary}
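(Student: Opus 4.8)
The plan is to combine Theorem~\ref{thm: f g r p p} with the elementary interpolation (Lyapunov) inequality for $\ell^p$ norms; the text's remark that this follows ``via Hölder's inequality on the right-hand side'' is exactly the route I would take. First I would apply \eqref{ineq: f*g r p p} with $g=f$, which immediately yields
\begin{equation*}
\|f*f\|_r \le \|f\|_{p_r}^2,
\end{equation*}
where $p_r=\frac{2r}{\log_2(2+2^r)}$. It therefore suffices to show that $\|f\|_{p_r}^2\le \|f\|_p\|f\|_q$ under the stated hypothesis on $p$ and $q$, reducing the corollary to a single comparison of $\ell^p$ norms.

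Next I would rewrite the hypothesis $\frac1p+\frac1q=\frac{\log_2(2+2^r)}{r}$ in terms of $p_r$. Since $\frac{1}{p_r}=\frac{\log_2(2+2^r)}{2r}$, the condition is precisely
\begin{equation*}
\frac{1}{p_r}=\tfrac12\Big(\tfrac1p+\tfrac1q\Big),
\end{equation*}
so that $\tfrac{1}{p_r}$ is the \emph{midpoint} of $\tfrac1p$ and $\tfrac1q$. In particular $\tfrac{1}{p_r}$ lies between $\tfrac1p$ and $\tfrac1q$, which is exactly what makes the interpolation parameter $\theta=\tfrac12$ admissible regardless of how $p$ and $q$ are ordered.

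Finally I would invoke the log-convexity of $\ell^p$ norms (equivalently Hölder's inequality): for the counting measure on $\Z^d$, if $\frac{1}{s}=\frac{1-\theta}{p}+\frac{\theta}{q}$ with $\theta\in[0,1]$, then $\|f\|_s\le \|f\|_p^{1-\theta}\|f\|_q^{\theta}$. Applying this with $s=p_r$ and $\theta=\tfrac12$ gives $\|f\|_{p_r}\le \|f\|_p^{1/2}\|f\|_q^{1/2}$; squaring and chaining with the first display completes the argument.

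As for the main obstacle, there is essentially none of substance: the corollary is a direct consequence of Theorem~\ref{thm: f g r p p} together with one application of Hölder's inequality, as the paper indicates. The only point that warrants a line of verification is that $\theta=\tfrac12$ is a legitimate interpolation weight, and this is automatic precisely because the reciprocal exponent $\tfrac{1}{p_r}$ is forced by the hypothesis to be the exact average of $\tfrac1p$ and $\tfrac1q$.
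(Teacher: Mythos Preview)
Your proposal is correct and follows exactly the route the paper indicates: apply Theorem~\ref{thm: f g r p p} with $g=f$ and then use the log-convexity of $\ell^p$ norms (Hölder's inequality) with interpolation parameter $\theta=\tfrac12$, which is admissible precisely because the hypothesis forces $\tfrac{1}{p_r}=\tfrac12(\tfrac1p+\tfrac1q)$. The paper does not provide any further details beyond the one-line remark, so your write-up is in fact more explicit than the original.
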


As discussed after Theorem \ref{thm: f g r p p}, the line $\frac{1}{p} + \frac{1}{q}=\frac{ \log_2(2+2^r)}{r}$ is sharp: taking $f=\mathbbm{1}_{\{0,1\}^d}$, one readily sees that \eqref{ineq: f*f r p q} fails if $\frac{1}{p} + \frac{1}{q} < \frac{\log_2(2^r+2)}{2}$. In the case of two different functions, it is not possible to obtain bounds for all  $1 \leq p,q \leq \infty$ in $\frac{1}{p} + \frac{1}{q}=\frac{ \log_2(2+2^r)}{r}$. Taking the functions $\mathbbm{1}_{\{0,1\}}$ and $\mathbbm{1}_{\{0\}}$ for $d=1$ immediately shows that one must have $p,q \leq r$. However, something stronger is true. 

\begin{proposition}\label{prop:necessary critical line}
Let $r > 1$ and $1\leq p,q \leq \infty$ be such that $\frac{1}{p}+\frac{1}{q}=\frac{\log_{2}(2^r+2)}{r}$. If \eqref{goal-inequality} holds for all $f,g:\Z^d\to \R$ supported in $\{0,1\}^d$, then $\frac{r+2^{r-1}}{1+2^{r-1}}\leq p,q\leq \frac{1}{\frac{\log_2 (2^r+2)}{r}-\frac{1+2^{r-1}}{r+2^{r-1}}}$.
\end{proposition}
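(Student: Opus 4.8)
The plan is to test the inequality against a one–parameter family of functions on the one–dimensional cube $\{0,1\}$ and to read off the constraints from a second–order analysis at the critical configuration. First I would reduce to $d=1$: any pair of functions on $\{0,1\}$ embeds into $\{0,1\}^d$ by tensoring with the indicator $\mathbbm{1}_{\{0\}}$ in the remaining $d-1$ coordinates, and since those factors contribute a factor $1$ to every norm and to the convolution, the $d$–dimensional inequality reduces exactly to the one–dimensional one. Thus it suffices to obstruct the inequality for $d=1$. Fix $f=\mathbbm{1}_{\{0,1\}}$ and, for $t>0$, let $g_t:\{0,1\}\to\R$ be given by $g_t(0)=1$, $g_t(1)=t$. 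Then $f*g_t$ is supported on $\{0,1,2\}$ with values $1,\,1+t,\,t$, so that
$$\|f*g_t\|_r^r=1+(1+t)^r+t^r,\qquad \|f\|_p=2^{1/p},\qquad \|g_t\|_q=(1+t^q)^{1/q}.$$

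I would then set $\phi(t)=\log\|f*g_t\|_r-\log\big(\|f\|_p\,\|g_t\|_q\big)$, so that the hypothesis that \eqref{goal-inequality} holds for this family is precisely $\phi(t)\le 0$ for all $t>0$. At $t=1$ one has $g_1=\mathbbm{1}_{\{0,1\}}$ and $f*g_1$ is the extremiser from Theorem \ref{thm: f g r p p}; the critical–line relation $\frac1p+\frac1q=\frac{\log_2(2^r+2)}{r}$ gives $\phi(1)=0$. A short computation shows $\tfrac{d}{dt}\log\|f*g_t\|_r\big|_{t=1}=\tfrac12=\tfrac{d}{dt}\log\|g_t\|_q\big|_{t=1}$, so $\phi'(1)=0$ automatically, independently of $p,q$. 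Hence $t=1$ is an interior point at which $\phi$ attains its global maximum value $0$, and necessity forces the second–order condition $\phi''(1)\le 0$.

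The heart of the argument is the evaluation of $\phi''(1)$. Writing $N(t)=1+(1+t)^r+t^r$ and differentiating $\tfrac1r\log N$ twice, the cancellations in $N''(1)N(1)-N'(1)^2$ simplify to give
$$\frac{d^2}{dt^2}\log\|f*g_t\|_r\Big|_{t=1}=\frac{r-2-2^{r-1}}{4\,(1+2^{r-1})},\qquad \frac{d^2}{dt^2}\log\|g_t\|_q\Big|_{t=1}=\frac{q-2}{4}.$$
The inequality $\phi''(1)\le 0$ then rearranges to $q\ge \frac{r+2^{r-1}}{1+2^{r-1}}$. Running the symmetric test with $f_t(0)=1,\,f_t(1)=t$ and $g=\mathbbm{1}_{\{0,1\}}$ yields in the same way $p\ge \frac{r+2^{r-1}}{1+2^{r-1}}$, i.e. $\tfrac1p,\tfrac1q\le \frac{1+2^{r-1}}{r+2^{r-1}}$. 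Finally, substituting these into the identity $\tfrac1q=\frac{\log_2(2^r+2)}{r}-\tfrac1p$ (and symmetrically) produces the stated upper bounds $p,q\le\big(\frac{\log_2(2^r+2)}{r}-\frac{1+2^{r-1}}{r+2^{r-1}}\big)^{-1}$.

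The only delicate point is the second–derivative computation. Because the first–order condition $\phi'(1)=0$ holds automatically, the binding constraint is genuinely of second order, and the bound is sensitive to the exact algebraic simplification of $N''(1)N(1)-N'(1)^2$; I expect this to be where care is needed, whereas the conceptual steps—reduction to $d=1$, the identities $\phi(1)=\phi'(1)=0$, and extracting the sign of $\phi''(1)$—are routine.
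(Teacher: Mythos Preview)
Your proposal is correct and follows essentially the same approach as the paper: reduce to $d=1$, fix one function to be $\mathbbm{1}_{\{0,1\}}$, perturb the other along a one-parameter family, observe that the critical-line relation forces equality and vanishing first derivative at the symmetric point, and then extract the necessary condition from the sign of the second derivative. The only cosmetic difference is that the paper parametrises the $p$-side function (obtaining the lower bound on $p$ first) while you parametrise the $q$-side function, but the computation and conclusion are identical after interchanging roles.
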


We are able to prove the sharp bounds along the line $\frac{1}{p}+\frac{1}{q}=\frac{\log_{2}(2^r+2)}{r}$ for $r=2$.

\begin{theorem}\label{thm:r=2 critical line}
Let $1\leq p,q \leq \infty$ be such that $\frac{1}{p}+\frac{1}{q}=\frac{\log_{2}6}{2}$. Then
\begin{equation}\label{ineq: lem f*g 2 p q}
\|f*g\|_2\leq \|f\|_{p}\|g\|_q
\end{equation}
holds for all $f,g:\Z^d\to \R$ supported in $\{0,1\}^d$ if and only if $\frac{4}{3}\leq p,q\leq \frac{1}{\frac{\log_2 3}{2}-\frac{1}{4}}$.
\end{theorem}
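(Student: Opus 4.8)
The backward (``only if'') direction is immediate from Proposition~\ref{prop:necessary critical line}: taking $r=2$ there forces $\tfrac43\le p,q$ together with $p,q\le\big(\tfrac{\log_2 6}{2}-\tfrac34\big)^{-1}$, and since $\tfrac{\log_2 6}{2}-\tfrac34=\tfrac{\log_2 3}{2}-\tfrac14$ this is exactly the stated range. Thus the real content is the forward direction, and I write $q_{up}:=\big(\tfrac{\log_2 3}{2}-\tfrac14\big)^{-1}$ for the upper endpoint.

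\emph{Step 1: reduction to $d=1$.} The plan is to induct on $d$ by slicing the last coordinate. Writing $f=f_0\oplus f_1$, $g=g_0\oplus g_1$ for the restrictions of $f,g$ to $\{x_d=0\}$ and $\{x_d=1\}$ (as functions on $\{0,1\}^{d-1}$), the levels $x_d=0,1,2$ of $f*g$ are $f_0*g_0$, $f_0*g_1+f_1*g_0$, $f_1*g_1$, with pairwise disjoint supports, so
\[
\|f*g\|_2^2=\|f_0*g_0\|_2^2+\|f_0*g_1+f_1*g_0\|_2^2+\|f_1*g_1\|_2^2 .
\]
Using the triangle inequality on the middle summand and the inductive bound $\|f_i*g_j\|_2\le\|f_i\|_p\|g_j\|_q$, and abbreviating $\alpha_i=\|f_i\|_p$, $\beta_j=\|g_j\|_q$, the claim reduces (since $\|f\|_p^p=\alpha_0^p+\alpha_1^p$, $\|g\|_q^q=\beta_0^q+\beta_1^q$) to the one-dimensional inequality
\[
(\alpha_0\beta_0)^2+(\alpha_0\beta_1+\alpha_1\beta_0)^2+(\alpha_1\beta_1)^2\le(\alpha_0^p+\alpha_1^p)^{2/p}(\beta_0^q+\beta_1^q)^{2/q}
\]
for all $\alpha_i,\beta_j\ge0$; this is precisely the base case $d=1$.

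\emph{Step 2: interpolating to one endpoint.} Expanding, the left-hand side equals $(\alpha_0^2+\alpha_1^2)(\beta_0^2+\beta_1^2)+2\alpha_0\alpha_1\beta_0\beta_1$, so the scalar inequality says exactly that the bilinear convolution map $\ell^p(\{0,1\})\times\ell^q(\{0,1\})\to\ell^2(\{0,1,2\})$ has norm at most $1$. Every point $(1/p,1/q)$ of the critical segment $\{1/p+1/q=\tfrac{\log_2 6}{2},\ \tfrac43\le p,q\le q_{up}\}$ is a convex combination of its two endpoints $(3/4,1/q_{up})$ and $(1/q_{up},3/4)$, so by bilinear Riesz--Thorin interpolation it is enough to prove the scalar inequality at a single endpoint, say $(p,q)=(\tfrac43,q_{up})$; the other follows from the symmetry $(f,g)\mapsto(g,f)$.

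\emph{Step 3: the endpoint inequality (main obstacle).} By homogeneity I normalize $\alpha_0^p+\alpha_1^p=\beta_0^q+\beta_1^q=1$ and substitute $\alpha_0=s^{1/p},\alpha_1=(1-s)^{1/p},\beta_0=t^{1/q},\beta_1=(1-t)^{1/q}$; with $p=\tfrac43$, $q=q_{up}$ the task becomes
\[
F(s,t)=\big(s^{2/p}+(1-s)^{2/p}\big)\big(t^{2/q}+(1-t)^{2/q}\big)+2\big(s(1-s)\big)^{1/p}\big(t(1-t)\big)^{1/q}\le1\quad\text{on }[0,1]^2 .
\]
I expect this to be the crux. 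The relation $\tfrac1p+\tfrac1q=\tfrac{\log_2 6}{2}$ makes $F$ equal to $1$ at the interior point $(\tfrac12,\tfrac12)$ (the extremizer $f=g=\mathbbm1_{\{0,1\}^d}$) \emph{and} at the four corners of the square, so the maximum is attained with multiplicity and no crude convexity bound can succeed. My plan is to fix $t$ and study $s\mapsto F(s,t)=C(t)\,A(s)+2D(t)\,B(s)$, a positive combination of the convex, symmetric function $A(s)=s^{2/p}+(1-s)^{2/p}$ and the concave, symmetric function $B(s)=(s(1-s))^{1/p}$; one analyzes the one-variable maximum and shows, using the critical-line identity to balance the growth of $A$ against the decay of $B$, that $F(\cdot,t)\le1$ for every $t$. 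Reconciling the interior and corner equalities in this one-variable analysis is the delicate step.
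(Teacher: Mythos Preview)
Your Steps 1 and 2 are correct and match the paper exactly: the paper invokes its Lemma~\ref{lem:induction} for the reduction to $d=1$, then proves the numerical inequality only at the endpoint $(p,q)=(4/3,q_{\mathrm{up}})$ and obtains the rest of the segment by interpolation with the symmetric endpoint.

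The gap is Step~3, where you only outline a plan. Your decomposition $F(s,t)=C(t)A(s)+2D(t)B(s)$ is a legitimate starting point (it is equivalent, after the change of variables $x=(s/(1-s))^{1/p}$, $y=(t/(1-t))^{1/q}$, to the paper's setup), but the sentence ``one analyzes the one-variable maximum and shows, using the critical-line identity\dots'' does not constitute a proof, and you yourself flag the interior/corner coincidence as unresolved. This is precisely where all the work lies, and a soft convexity/concavity argument of the type you describe cannot succeed, since the maximum is attained at both the center and the corners.

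What the paper does instead is exploit the specific value $p=4/3$. Working in the unnormalized variables, it sets $f_y(x)=\dfrac{[1+(x+y)^2+(xy)^2]^2}{(1+x^{4/3})^3}$ and computes that the numerator of $f_y'$ factors \emph{explicitly} as
\[
(x^{2/3}-1)(-y)(x^{1/3}-y)(x^{1/3}-1/y),
\]
so the only critical points are $x\in\{1,y^3,1/y^3\}$. One checks $x=1$ is a local minimum and that $f_y(y^3)=f_y(1/y^3)$ by an algebraic identity, reducing everything to the single-variable inequality $(1+y^2+y^4)^2\le(1+y^4)(1+y^{q_{\mathrm{up}}})^{4/q_{\mathrm{up}}}$ on $(0,1]$. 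That last inequality is then dispatched by taking logarithms, differentiating, and counting sign changes of a degree-$12$ polynomial via Descartes' rule. None of these steps is visible from your $A(s),B(s)$ formulation; the explicit factorization of the derivative is the missing idea.
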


One can interpolate the results obtained in the line $\frac{1}{p}+\frac{1}{q}=\frac{\log_{2}(2^r+2)}{r}$ with the bounds from the classical Young's convolution inequality for $p=1$ or $q=1$ to obtain further off-diagonal bounds. In the $r=2$, one obtains the following.

\begin{corollary}\label{cor:r=2}
Let $1\leq p,q \leq \infty$. Then
%\begin{itemize}
%\item[(i)] 
the inequality
\begin{equation}\label{ineq: thm f*g 2 p q}
\|f*g\|_2\leq \|f\|_{p}\|g\|_q
\end{equation}
holds for all $f,g:\Z^d\to \R$ supported in $\{0,1\}^d$ if $(1/p,1/q)\in \Omega$, where $\Omega$ is the pentagon with vertices $(1/2,1),(1,1),(1,1/2),(3/4,\frac{\log_2 3}{2}-\frac{1}{4})$ and $(\frac{\log_2 3}{2}-\frac{1}{4},3/4)$. Moreover, the inequality \eqref{ineq: thm f*g 2 p q} does not hold if $\frac{1}{p}+\frac{1}{q}<\frac{\log_2 6}{2}$ or $\min\{1/p,1/q\}<1/2$.
%\item[(ii)] The inequality \eqref{ineq: lem f*g 2 p q r} doesn't hold if $\frac{1}{p}+\frac{1}{q}<\frac{\log_2 (2^r+2)}{2}$ or $\min\{1/p,1/q\}<1/r$.
%In particular, the inequality \eqref{ineq: thm f*g 2 p q} does not hold if $\frac{1}{p}+\frac{1}{q}<\frac{\log_2 6}{2}$ or $\min\{1/p,1/q\}<1/2$.
%\end{itemize}
\end{corollary}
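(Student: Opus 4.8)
The plan is to establish the two assertions separately: the sufficiency of the condition $(1/p,1/q)\in\Omega$ by interpolating from cases already in hand, and the two necessity conditions by exhibiting explicit test pairs.

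For the sufficiency, the structural point is that the set $\mathcal V=\{(1/p,1/q):\ \|f*g\|_2\le\|f\|_p\|g\|_q\ \text{for all } f,g\ \text{supported in }\{0,1\}^d\}$ is convex. Since the target exponent $r=2$ is fixed and the constant in \eqref{ineq: thm f*g 2 p q} is $1$, the bilinear complex interpolation (Riesz--Thorin) theorem applied to $(f,g)\mapsto f*g$ shows that whenever \eqref{ineq: thm f*g 2 p q} holds with constant $1$ at $(1/p_0,1/q_0)$ and at $(1/p_1,1/q_1)$, it holds with constant $1$ at every point of the segment joining them. It therefore suffices to verify \eqref{ineq: thm f*g 2 p q} at the five vertices of $\Omega$ and invoke $\Omega=\operatorname{conv}\{(\tfrac12,1),(1,1),(1,\tfrac12),(\tfrac34,\tfrac{\log_2 3}{2}-\tfrac14),(\tfrac{\log_2 3}{2}-\tfrac14,\tfrac34)\}$. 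The vertices $(\tfrac12,1)$ and $(1,\tfrac12)$ lie on the classical Young line $\tfrac1p+\tfrac1q=\tfrac32=1+\tfrac12$, so the inequality holds there by \eqref{goal-inequality}; at $(1,1)$ one uses the elementary chain $\|f*g\|_2\le\|f*g\|_1\le\|f\|_1\|g\|_1$; and the two remaining vertices are precisely the endpoints of the sharp segment of Theorem~\ref{thm:r=2 critical line} (taking $p=\tfrac43$ and $p=(\tfrac{\log_2 3}{2}-\tfrac14)^{-1}$), where the inequality holds by that theorem. This yields $\Omega\subseteq\mathcal V$.

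For the necessity, I would argue by contradiction with two families of examples. If $\tfrac1p+\tfrac1q<\tfrac{\log_2 6}{2}$, take $f=g=\mathbbm 1_{\{0,1\}^d}$; a coordinatewise count gives $f*g(x)=\prod_i c(x_i)$ with $c(0)=c(2)=1$ and $c(1)=2$, so that $\|f*g\|_2=6^{d/2}$, while $\|f\|_p\|g\|_q=2^{d(1/p+1/q)}$. Then \eqref{ineq: thm f*g 2 p q} would force $6^{1/2}\le 2^{1/p+1/q}$, i.e. $\tfrac1p+\tfrac1q\ge\tfrac{\log_2 6}{2}$, a contradiction. If instead $\min\{1/p,1/q\}<\tfrac12$, say $1/p<\tfrac12$, take $f=\mathbbm 1_{\{0,1\}^d}$ and $g=\mathbbm 1_{\{\mathbf 0\}}$, so that $f*g=f$; then $\|f*g\|_2=2^{d/2}$ and $\|f\|_p\|g\|_q=2^{d/p}$, which forces $1/p\ge\tfrac12$. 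The case $1/q<\tfrac12$ is symmetric.

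The substantive input is entirely the assumed Theorem~\ref{thm:r=2 critical line}; the only genuine care needed is to check that the bilinear interpolation is legitimate with the target fixed at $\ell^2$ and with unit constant, and to confirm that the five listed points are in convex position, so that $\Omega$ really is their convex hull and each vertex is covered by an available bound. I expect this bookkeeping --- rather than any new estimate --- to be the main (and only modest) obstacle.
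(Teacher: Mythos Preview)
Your proposal is correct and matches the paper's approach exactly: the paper states that the corollary ``is immediate by interpolation'' from Theorem~\ref{thm:r=2 critical line} and the classical Young endpoints, and that the necessary conditions follow from the test examples $f=g=\mathbbm 1_{\{0,1\}^d}$ and $f=\mathbbm 1_{\{0,1\}^d},\ g=\mathbbm 1_{\{\mathbf 0\}}$ already discussed after Corollary~\ref{cor:f f pqr}. Your bookkeeping (bilinear Riesz--Thorin with fixed target $\ell^2$ and unit constants, checking the five vertices are in convex position) is the right level of care and presents no obstacle.
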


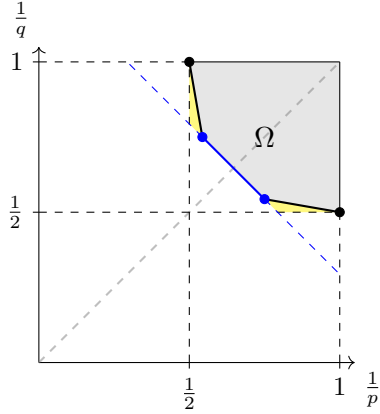
\begin{figure}
  \centering
  \begin{tikzpicture}[scale=4]
    % Coordinates
    \coordinate (A) at (0.75, 0.5435);
    \coordinate (B) at (0.5435, 0.75);
    \coordinate (D) at (1, 0.5);
    \coordinate (E) at (0.5, 1);
    \coordinate (TopRight) at (1,1);

    % Corrected vertices for the white triangles
    \coordinate (H) at (0.7935, 0.5); % Intersection on y=0.5 line near (1,1/2)
    \coordinate (K) at (0.5, 0.7935); % Intersection on x=0.5 line near (1/2,1)

    % Fill region above solid blue lines (gray!20)
    \fill[gray!20]
      (D) -- (TopRight) -- (E) -- (B) -- (A) -- cycle;

    % Fill the two white triangles with yellow and NO boundary edges
    \fill[yellow, opacity=0.5] (D) -- (A) -- (H) -- cycle; % triangle near (1,1/2)

    \fill[yellow, opacity=0.5] (E) -- (B) -- (K) -- cycle; % triangle near (1/2,1)

    % Place Omega in the gray region
    \node at (0.75,0.75) {\large $\Omega$};

    % Axes
    \draw[->] (0,0) -- (1.05,0) node[below right] {$\frac{1}{p}$};
    \draw[->] (0,0) -- (0,1.05) node[above left] {$\frac{1}{q}$};

    % Ticks at 1 and 1/2
    \draw (1,0pt) -- (1,-0.02) node[below] {$1$};
    \draw (0pt,1) -- (-0.02,1) node[left] {$1$};

    \draw (0.5,0pt) -- (0.5,-0.02) node[below] {$\tfrac{1}{2}$};
    \draw (0pt,0.5) -- (-0.02,0.5) node[left] {$\tfrac{1}{2}$};

    % Diagonal - lighter gray dashed line
    \draw[thick, gray, dashed, opacity=0.5] (0,0) -- (1,1);

    % Dashed lines at 1, except for top/right corner segments
    \draw[dashed] (1,0) -- (1,0.5);
    \draw (1,0.5) -- (1,1);
    \draw[dashed] (0,1) -- (0.5,1);
    \draw (0.5,1) -- (1,1);

    % Dashed lines at 1/2
    \draw[dashed] (0,0.5) -- (1,0.5);
    \draw[dashed] (0.5,0) -- (0.5,1);

    % Draw blue lines
    \draw[blue, thick] (A) -- (B);
    \draw[blue, dashed] (B) -- (0.2935, 1);
    \draw[blue, dashed] (A) -- (1, 0.2935);
    \draw[black, thick] (A) -- (D);
    \draw[black, thick] (B) -- (E);

    % Points without labels
    \filldraw[blue] (A) circle (0.015);
    \filldraw[blue] (B) circle (0.015);

    % Black dots at (1/2,1) and (1,1/2)
    \filldraw[black] (0.5,1) circle (0.015);
    \filldraw[black] (1,0.5) circle (0.015);

  \end{tikzpicture}
  \caption{The case $r=2$ in Corollary \ref{cor:r=2}. The \textcolor{blue}{blue} points represent the exponents $(1/p,1/q) = (3/4, \frac{\log_2 3}{2} - \frac{1}{4})$ and $(1/p,1/q) = (\frac{\log_2 3}{2} - \frac{1}{4}, 3/4)$. The convolution inequality $\|f \ast g\|_2 \leq \|f\|_p \|g\|_q$ for $f,g:\Z^d\to \R$ supported in $\{0,1\}^d$ holds for $(1/p,1/q)$ in the \textcolor{gray}{gray} region $\Omega$, and fails in the non-colored region. The validity of the inequality in the interior of the yellow triangles and the boundary segments $1/p=1/2$, $1/q=1/2$ remains open.}
  \label{fig:region-plot}
\end{figure}

Note that Corollary \ref{cor:r=2} does not completely characterise the exponents $1 \leq p,q \leq \infty$ for which the \eqref{ineq: thm f*g 2 p q} holds:  see Figure \ref{fig:region-plot}.

\subsection{Applications to $k$-higher additive energies}

The inequality \eqref{goal-inequality} can be re-interpreted as a bound for the $k$-higher additive energies of two sets when $f$ and $g$ are characteristic functions. Let $A, B \subseteq \{-1,1\}^d$ and $k \geq 2$ be an integer. Following \cite{Shkredov, SS}, we define the \textit{$k$-higher additive energy} of $A$ and $B$ as
\[
\widetilde{E}_{k}(A,B):=|\{(a_1,\dots,a_{k}, b_1, \dots, b_k) \in A^{k} \times B^k : a_1-b_1= \cdots = a_{k} - b_{k}\}|
\]
and observe that $\widetilde{E}_k(A,B)=\| \bbone_{A} \ast \widetilde{\bbone_B}\|_{k}^k$, where $\widetilde{f}(x):=f(-x)$ for any given $f : \{-1,1\}^{d} \to \R$. Bounds for $\widetilde{E}_k(A,B)$ when $A=B \subseteq \{-1,1\}^d$  were established by Kane and Tao \cite{KT} for $k=2$ and by De Dios, Ivanisvili, Greenfeld and Madrid \cite{DGIM} for a general integer $k \geq 2$. By a direct application\footnote{Theorem \ref{thm: f g r p p} equally applies to $\{-1,1\}^d$, which is more convenient for the application to additive energies because of the reflected function $\widetilde{1_B}$.} of Theorem \ref{thm: f g r p p} we recover their results and extend them to a pair of subsets $A,B$.

\begin{corollary}
    Let $d \geq 1$, $k \geq 2$, $q_k:=\log_2(2+2^k)$ and $A,B \subseteq \{-1,1\}^d$. Then $\widetilde{E}_k(A,B) \leq |A|^{q_k/2}|B|^{q_k/2}$. Furthermore, the exponent $q_k$ cannot be replaced by any smaller number.
\end{corollary}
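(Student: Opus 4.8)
The plan is to derive the upper bound as an immediate consequence of Theorem \ref{thm: f g r p p} (applied on $\{-1,1\}^d$, as permitted by the footnote), and then to verify sharpness by an explicit computation with $A=B=\{-1,1\}^d$.

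First, I recall from the discussion preceding the statement that $\widetilde{E}_k(A,B) = \|\mathbbm{1}_A \ast \widetilde{\mathbbm{1}_B}\|_k^k$. I would apply Theorem \ref{thm: f g r p p} with $r=k$, $f=\mathbbm{1}_A$ and $g=\widetilde{\mathbbm{1}_B}$, whose critical exponent is $p_k=\frac{2k}{\log_2(2+2^k)}=\frac{2k}{q_k}$. Since reflection preserves the $\ell^p$ norm, $\|\widetilde{\mathbbm{1}_B}\|_{p_k}=|B|^{1/p_k}$, so that
\[
\|\mathbbm{1}_A \ast \widetilde{\mathbbm{1}_B}\|_k \leq |A|^{1/p_k}|B|^{1/p_k}.
\]
Raising to the $k$-th power and using $k/p_k = q_k/2$ yields $\widetilde{E}_k(A,B)\leq |A|^{q_k/2}|B|^{q_k/2}$, which is the claimed bound.

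For sharpness, I would take $A=B=\{-1,1\}^d$, so that $|A|=|B|=2^d$ and the right-hand side equals $2^{dq_k}=(2+2^k)^d$. To show the left-hand side attains this value, I count tuples coordinate by coordinate. The constraint $a_1-b_1=\cdots=a_k-b_k=:c$ forces the common difference $c$ to lie in $\{-2,0,2\}^d$; for a fixed $c$ with exactly $m$ vanishing coordinates, each pair $(a_i,b_i)$ is determined up to $2^m$ choices (two possibilities on each zero coordinate, one on each nonzero coordinate), and since the $k$ pairs are independent given $c$, this contributes $2^{mk}$ tuples. Summing over the $\binom{d}{m}2^{d-m}$ vectors $c$ with $m$ zeros and applying the binomial theorem,
\[
\widetilde{E}_k(\{-1,1\}^d,\{-1,1\}^d)=\sum_{m=0}^d \binom{d}{m}2^{d-m}2^{mk}=2^d(1+2^{k-1})^d=(2+2^k)^d,
\]
which matches the right-hand side exactly. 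Consequently, replacing $q_k$ by any $q<q_k$ would force $(2+2^k)^d\leq 2^{dq}$ for all $d\geq 1$, hence $q\geq\log_2(2+2^k)=q_k$, a contradiction.

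Since both steps are direct, I do not anticipate a genuine obstacle; the only point requiring care is the coordinate-wise bookkeeping in the sharpness computation, where one must correctly exploit the independence of the $k$ pairs $(a_i,b_i)$ once the difference $c$ is fixed. This double factorization, across coordinates and across the index $i$, is precisely what produces the clean closed form $(2+2^k)^d$ and thereby equality in the main bound.
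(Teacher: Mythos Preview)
Your proof is correct and follows exactly the approach the paper intends: the upper bound is the direct application of Theorem~\ref{thm: f g r p p} to $f=\mathbbm{1}_A$ and $g=\widetilde{\mathbbm{1}_B}$ on $\{-1,1\}^d$, and sharpness comes from $A=B=\{-1,1\}^d$. The paper does not spell out the sharpness computation (it simply inherits it from the sharpness of $p_r$ in Theorem~\ref{thm: f g r p p}), whereas you carry out the explicit coordinate-wise count yielding $(2+2^k)^d$; this is a welcome elaboration but not a different argument.
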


In the same manner, we can use Proposition \ref{thm:r=2 critical line} to obtain a bound for the standard additive energy $(k=2)$ with different exponents on the cardinalities of $A$ and $B$.

\begin{corollary}\label{cor:additive k=2}
    Let $d \geq 1$ and $\frac{4}{3} \leq p, q \leq \frac{1}{\frac{\log_2 3}{2} - \frac{1}{4}}$ such that $\frac{1}{p}+\frac{1}{q}=\frac{\log_2 6}{2}$. Let $A,B \subseteq \{-1,1\}^d$. Then $\widetilde{E}_2(A,B) \leq |A|^{p}|B|^{q}$. Furthermore, the range for $p$ and $q$ is sharp.
\end{corollary}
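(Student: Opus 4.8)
The plan is to derive Corollary~\ref{cor:additive k=2} as a direct consequence of Theorem~\ref{thm:r=2 critical line}, following the same dictionary between convolution inequalities and additive energies that was used in the preceding corollary. The key identity is $\widetilde{E}_2(A,B)=\|\bbone_A \ast \widetilde{\bbone_B}\|_2^2$, where $\widetilde{f}(x):=f(-x)$; this was recorded in the previous subsection and holds equally on $\{-1,1\}^d$ by the footnote to Theorem~\ref{thm: f g r p p}.

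First I would observe that reflection is an isometry on every $\ell^s$ space, so $\|\widetilde{\bbone_B}\|_s = \|\bbone_B\|_s = |B|^{1/s}$ for all $s$, and that $\widetilde{\bbone_B}$ is supported in $\{-1,1\}^d$ whenever $\bbone_B$ is. Thus both $\bbone_A$ and $\widetilde{\bbone_B}$ are admissible inputs for the convolution inequality. Applying Theorem~\ref{thm:r=2 critical line} with the exponents $p,q$ satisfying $\frac{1}{p}+\frac{1}{q}=\frac{\log_2 6}{2}$ and $\frac{4}{3}\le p,q \le \frac{1}{\frac{\log_2 3}{2}-\frac14}$ gives
\begin{equation*}
\|\bbone_A \ast \widetilde{\bbone_B}\|_2 \leq \|\bbone_A\|_p \, \|\widetilde{\bbone_B}\|_q = |A|^{1/p} |B|^{1/q}.
\end{equation*}
Squaring both sides yields $\widetilde{E}_2(A,B) \leq |A|^{2/p} |B|^{2/q}$. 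To match the stated conclusion $\widetilde{E}_2(A,B)\leq |A|^{p}|B|^{q}$, I would reconcile the exponents: the clean way is to relabel, writing the bound as $|A|^{2/p}|B|^{2/q}$ and noting that setting $P=2/p$, $Q=2/q$ recasts the constraint $\frac1p+\frac1q=\frac{\log_2 6}{2}$ as $\frac{1}{P}+\frac{1}{Q}=\frac{1}{\log_2 6}\cdot 2 = \frac{2}{\log_2 6}$, which does not literally produce the corollary's exponents $p,q$ in the exponent slot. The likely intended reading is that the exponents $p,q$ in the energy bound are exactly those appearing in the norm inequality up to the square, so I would state the result precisely as $\widetilde{E}_2(A,B)\le |A|^{2/p}|B|^{2/q}$ and carry the square through the sharp range.

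For the sharpness claim, I would argue that any improvement of the exponent range for the energy bound would, upon taking square roots and using the isometry of reflection in reverse, yield an improvement of the admissible range in Theorem~\ref{thm:r=2 critical line}, contradicting the sharpness already established there. Concretely, the extremizers $f=\bbone_{\{-1,1\}^d}$ on the diagonal and the boundary examples $\mathbbm 1_{\{-1,1\}}$, $\mathbbm 1_{\{-1\}}$ (the $\{-1,1\}^d$ analogues of the functions used to prove Proposition~\ref{prop:necessary critical line}) translate directly into sets $A,B$ saturating the energy inequality, so the constraint $\frac43 \le p,q \le \frac{1}{\frac{\log_2 3}{2}-\frac14}$ is forced. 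The main obstacle I anticipate is purely bookkeeping: keeping the square (the passage from the $\ell^2$ norm to the energy $\widetilde{E}_2 = \|\cdot\|_2^2$) consistent between the inequality and the sharpness direction, and ensuring the reflection $\widetilde{\bbone_B}$ is correctly accounted for in both the $\ell^q$ norm and the definition of $\widetilde{E}_2$. There is no genuine analytic difficulty beyond Theorem~\ref{thm:r=2 critical line} itself.
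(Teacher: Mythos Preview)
Your approach is exactly the paper's: Corollary~\ref{cor:additive k=2} is stated without proof, as a direct application of Theorem~\ref{thm:r=2 critical line} via the identity $\widetilde{E}_2(A,B)=\|\bbone_A\ast\widetilde{\bbone_B}\|_2^2$ (the same dictionary used for the preceding corollary). You are right that the computation gives $\widetilde{E}_2(A,B)\le |A|^{2/p}|B|^{2/q}$ rather than $|A|^{p}|B|^{q}$; this is a typo in the paper's statement, as one can check by comparing with the diagonal case in the corollary just above, where the exponent is $q_2/2=(\log_2 6)/2=2/p_2$, not $p_2$.

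One genuine caveat on the sharpness argument: your reverse implication ``an improved energy bound for sets would improve Theorem~\ref{thm:r=2 critical line}'' is not valid as written, because the energy inequality only tests characteristic functions, whereas the counterexamples in the proof of Proposition~\ref{prop:necessary critical line} come from analysing the numerical inequality at $y=1$ and $x$ \emph{near but not equal to} $1$ (a second-derivative test at $x=1$), hence correspond to non-characteristic $f$. The paper does not supply a separate argument for sharpness over sets either, so this is not a deviation from the paper's proof but rather a gap in the statement itself; closing it would require either exhibiting set counterexamples directly (e.g.\ via products $\{0,1\}^{d_1}\times\{0\}^{d_2}$) or a tensorisation trick, neither of which is in your proposal or the paper.
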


We also refer to \cite{BCK} for other recent results related to additive energies on the hypercube.

%It would be interesting to extend Corollary \ref{cor:additive k=2} to $k$-higher energies $\widetilde{E}_k(A,B)$.

\subsection{Sharp reverse Young's convolution inequality}

Young's convolution inequality \eqref{goal-inequality} does not hold in the non-Banach range $r<1$. However, Leindler \cite{Leindler} showed that its converse holds in this case. Namely, given $0<p,q, r < 1$ with $1+\frac{1}{r}=\frac{1}{p}+\frac{1}{q}$, the inequality
\begin{equation}\label{eq:reverse young general}
    \|f \ast g\|_{\ell^r(\Z^d)} \geq \|f\|_{\ell^p(\Z^d)} \|g\|_{\ell^q(\Z^d)}
\end{equation}
holds for all $f,g:\Z^d \to [0,\infty)$. As in the case $r>1$, the inequality \eqref{eq:reverse young general} can be sharpened for functions supported in $\{0,1\}^d$. The counterpart to Theorem \ref{thm: f g r p p} is the following. 

\begin{theorem}\label{thm: f g r p p reverse}
    Let $0 <r <  1$, and $p_r:=\frac{2r}{\log_2(2+2^r)}$. Then
\begin{equation}\label{rev-y01}
\|f*g\|_r\geq \|f\|_{p_r} \|g\|_{p_r}
\end{equation}
holds for all $f,g:\Z^d\to \R$ supported in $\{0,1\}^d$. Moreover, the exponent $p_r$ cannot be replaced by any smaller real number.
\end{theorem}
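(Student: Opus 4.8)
The plan is to run the same induction on the dimension $d$ that underlies Theorem~\ref{thm: f g r p p}, but with every inequality reversed and with the triangle inequality replaced by its reverse form valid for $0<r<1$. Note first that the reverse bound genuinely requires nonnegativity: already in $d=1$ it fails for sign‑changing functions, e.g. $f=(1,1)$, $g=(1,-1)$ give $f*g=(1,0,-1)$ with $\|f*g\|_r=2^{1/r}<(2+2^r)^{1/r}=\|f\|_{p_r}\|g\|_{p_r}$. So I would work with $f,g\ge 0$, as in Leindler's theorem. Splitting off the last coordinate, write $f_\varepsilon(x')=f(x',\varepsilon)$ and $g_\varepsilon(x')=g(x',\varepsilon)$ for $\varepsilon\in\{0,1\}$, $x'\in\{0,1\}^{d-1}$; then
\[
\|f*g\|_r^r=\|f_0*g_0\|_r^r+\|f_0*g_1+f_1*g_0\|_r^r+\|f_1*g_1\|_r^r,
\]
where the convolutions on the right are taken on $\Z^{d-1}$.

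For the inductive step I would bound the cross term from below by the reverse Minkowski inequality, $\|u+v\|_r\ge\|u\|_r+\|v\|_r$ for $u,v\ge0$ and $0<r<1$, together with the induction hypothesis applied to each of the four nonnegative convolutions $f_\varepsilon*g_{\varepsilon'}$. Writing $A_\varepsilon=\|f_\varepsilon\|_{p_r}$ and $B_\varepsilon=\|g_\varepsilon\|_{p_r}$, this gives
\[
\|f*g\|_r^r\ \ge\ (A_0B_0)^r+(A_0B_1+A_1B_0)^r+(A_1B_1)^r .
\]
Thus the whole theorem reduces to the dimension‑free, four–variable inequality $(A_0B_0)^r+(A_0B_1+A_1B_0)^r+(A_1B_1)^r\ge[(A_0^{p_r}+A_1^{p_r})(B_0^{p_r}+B_1^{p_r})]^{r/p_r}$, which also serves as the base case $d=1$. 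By homogeneity (degree $r$ separately in the $A$'s and in the $B$'s), dividing by $(A_0B_0)^r$ and setting $a=A_1/A_0$, $b=B_1/B_0$, this is equivalent to the two–variable inequality
\[
1+(a+b)^r+(ab)^r\ \ge\ \big[(1+a^{p_r})(1+b^{p_r})\big]^{r/p_r},\qquad a,b\ge0 .
\]

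The main obstacle is this two–variable inequality, which is exactly where the value $p_r$ is forced: equality holds at $a=b=1$, reflecting the extremiser $f=g=\mathbbm{1}_{\{0,1\}^d}$. I would prove it by exploiting its symmetries — invariance under $a\leftrightarrow b$ and under $(a,b)\mapsto(1/a,1/b)$ — to restrict to $ab\le1$, and then show that the minimum of the ratio of the two sides is attained on the diagonal $a=b$. A Lagrange‑multiplier/critical‑point analysis should reduce matters to the one–variable inequality $1+2^rt^r+t^{2r}\ge(1+t^{p_r})^{2r/p_r}$, which I would settle by elementary calculus, verifying that $t=1$ is the unique critical point and a global minimum (the ratio tending to $1^+$ as $t\to\infty$ since $p_r>r$ for $r<1$). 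Rigorously establishing that the diagonal is extremal, rather than merely plausible from numerics, is the delicate point; this is the reverse analogue of the corresponding step in Theorem~\ref{thm: f g r p p}, and I expect the argument there to transfer once the inequalities are reversed.

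Finally, sharpness is immediate from the extremiser: for $f=g=\mathbbm{1}_{\{0,1\}^d}$ one has $\mathbbm{1}_{\{0,1\}}*\mathbbm{1}_{\{0,1\}}=(1,2,1)$, so by tensorization $\|f*g\|_r=(2+2^r)^{d/r}$, while $\|f\|_{p_r}\|g\|_{p_r}=2^{2d/p_r}=(2+2^r)^{d/r}$; hence equality holds in \eqref{rev-y01}. Replacing $p_r$ by any smaller exponent $p'$ strictly increases $\|f\|_{p'}\|g\|_{p'}=2^{2d/p'}$ beyond $\|f*g\|_r$, so $p_r$ cannot be lowered.
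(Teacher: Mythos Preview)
Your outline is correct and coincides with the paper's approach: the same dimensional induction via reverse Minkowski (Lemma~\ref{lem:induction}(ii)), the same two--variable numerical inequality, and the same two-step attack---reduce to the diagonal, then handle the one-variable case. Your expectation that the diagonal reduction transfers is borne out: the paper proves $H_r(x,y)\ge H_r(\sqrt{xy},\sqrt{xy})$ for $r<1$ (Proposition~\ref{prop:reduction to diagonal}(ii)) by differentiating $s\mapsto H_r(Ae^{-s},Ae^{s})$ and showing $x\partial_xH-y\partial_yH$ has a definite sign off the diagonal, the sign flipping relative to the $r>1$ case precisely because $p_r>r$ when $r<1$. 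One minor correction: in the one--variable step the ratio also equals $1$ at $t=0$ and as $t\to\infty$, so $t=1$ is not the unique critical point; the paper's Lemma~\ref{lem:numerical f f p p}(ii) instead reduces to $[0,1]$ by the $t\mapsto 1/t$ symmetry and counts sign changes of the derivative. Your observation that nonnegativity of $f,g$ is required is well taken.
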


In the off-diagonal case, we also have an analogous necessary condition to that in Proposition \ref{prop:necessary critical line}.

\begin{proposition}\label{prop:necessary critical line r<1}
    Let $0<r < 1$ and $0 < p,q < 1$ be such that $\frac{1}{p}+\frac{1}{q}=\frac{\log_{2}(2^r+2)}{r}$. If
    \begin{equation*}%\label{eq:reverse necessary}
        \|f \ast g\|_{\ell^{r}(\Z^d)} \geq \| f \|_{\ell^p(\Z^d)} \|g\|_{\ell^q(\Z^d)}
    \end{equation*}
    holds for all $f,g:\Z^d\to [0,\infty)$ supported in $\{0,1\}^d$, then 
    $$\frac{1}{\frac{\log_2 (2^r+2)}{r}-\frac{1+2^{r-1}}{r+2^{r-1}}} \leq p,q\leq \frac{r+2^{r-1}}{1+2^{r-1}}. $$
\end{proposition}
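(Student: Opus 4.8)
The plan is to prove the two upper bounds $p,q\le \frac{r+2^{r-1}}{1+2^{r-1}}$ by testing the hypothesized inequality against a one-parameter deformation of the full-cube indicator and extracting a second-order stability condition; the two lower bounds are then automatic. Indeed, once $q\le \frac{r+2^{r-1}}{1+2^{r-1}}$ is known, i.e. $\tfrac1q\ge \frac{1+2^{r-1}}{r+2^{r-1}}$, the critical-line identity $\tfrac1p+\tfrac1q=\frac{\log_2(2^r+2)}{r}$ gives
\[
\frac1p=\frac{\log_2(2^r+2)}{r}-\frac1q\le \frac{\log_2(2^r+2)}{r}-\frac{1+2^{r-1}}{r+2^{r-1}},
\]
which is exactly the claimed lower bound for $p$, and symmetrically for $q$. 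By the symmetry $p\leftrightarrow q$ (swap the roles of $f$ and $g$) it therefore suffices to prove $p\le \frac{r+2^{r-1}}{1+2^{r-1}}$.

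For the test I would take $d=1$, $f=\mathbbm{1}_{\{0\}}+s\,\mathbbm{1}_{\{1\}}$ and $g=\mathbbm{1}_{\{0,1\}}$ with $s\ge 0$; both are nonnegative and supported in $\{0,1\}$. A direct computation gives $f\ast g=(1,1+s,s)$, so with $\phi(s):=1+(1+s)^r+s^r$ and $\psi(s):=1+s^p$ one has $\|f\ast g\|_r=\phi(s)^{1/r}$, $\|f\|_p=\psi(s)^{1/p}$ and $\|g\|_q=2^{1/q}$. Setting
\[
\Phi(s):=\phi(s)^{1/r}-2^{1/q}\,\psi(s)^{1/p},
\]
the point $s=1$ recovers $f=\mathbbm{1}_{\{0,1\}}$, and using the factorization $2+2^r=2(1+2^{r-1})$ together with the critical-line identity $2^{1/p+1/q}=(2^r+2)^{1/r}$ one checks $\Phi(1)=0$: this is the degenerate extremizer $f=g=\mathbbm{1}_{\{0,1\}}$ defining the line. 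The hypothesis forces $\Phi(s)\ge 0$ for all $s\ge 0$, so $s=1$ is an interior global minimum, whence $\Phi'(1)=0$ and $\Phi''(1)\ge 0$.

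The decisive—and at first surprising—feature is that the first-order condition is automatic. Using the logarithmic-derivative identity $\frac{F''}{F}=\frac1a\frac{\varphi''}{\varphi}+\big(\frac1{a^2}-\frac1a\big)\frac{(\varphi')^2}{\varphi^2}$ for $F=\varphi^{1/a}$, and evaluating at $s=1$ with the simplification $(\phi'(1))^2/\phi(1)^2=r^2/4$ (again from $2+2^r=2(1+2^{r-1})$), one finds $\Phi'(1)=0$ identically on the critical line, so the constraint on $p$ must come from the second variation. Carrying out the same evaluation for the second derivatives yields the clean values
\[
\frac{(\phi^{1/r})''(1)}{\phi(1)^{1/r}}=\frac{r-1}{4(1+2^{r-1})},\qquad \frac{(\psi^{1/p})''(1)}{\psi(1)^{1/p}}=\frac{p-1}{4}.
\]
Dividing $\Phi''(1)\ge 0$ by the common positive value $\phi(1)^{1/r}=2^{1/q}\psi(1)^{1/p}$, the inequality becomes $\frac{r-1}{4(1+2^{r-1})}\ge \frac{p-1}{4}$, i.e. precisely $p\le \frac{r+2^{r-1}}{1+2^{r-1}}$ (the sign working out because $r<1$ makes the left-hand side negative). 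Swapping $f$ and $g$ gives $q\le \frac{r+2^{r-1}}{1+2^{r-1}}$, and combining with the first paragraph completes the proof.

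The main obstacle is conceptual rather than computational: because the natural extremizer $\mathbbm{1}_{\{0,1\}}$ is degenerate—the first variation vanishes for every admissible $(p,q)$ on the line—the bound is invisible at first order, and one is forced to extract it from the second variation. The remaining work is the bookkeeping of the two logarithmic second derivatives, which collapses to the displayed closed forms exactly because of the factorization $2+2^r=2(1+2^{r-1})$; this same computation simultaneously recovers the $r>1$ statement of Proposition~\ref{prop:necessary critical line} with the inequality direction reversed.
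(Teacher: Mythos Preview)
Your proof is correct and follows essentially the same approach as the paper: both test the $d=1$ inequality with $g=\mathbbm{1}_{\{0,1\}}$ and a one-parameter family $f$, observe that the extremizer $f=g=\mathbbm{1}_{\{0,1\}}$ forces equality and a vanishing first variation on the critical line, and extract the bound $p\le\frac{r+2^{r-1}}{1+2^{r-1}}$ from the sign of the second variation at $s=1$. The only cosmetic difference is that the paper works with the log-difference $g(x)=\frac{r}{p}\log\bigl(\tfrac{1+x^p}{2}\bigr)-\log\bigl(\tfrac{x^r+(x+1)^r+1}{2^r+2}\bigr)$ and reads off the second-order condition via the numerator $h$ of $g'$, whereas you compute the second derivative of $\Phi=\phi^{1/r}-2^{1/q}\psi^{1/p}$ directly using the logarithmic-derivative identity; the resulting numerical condition $(p-1)(1+2^{r-1})\le r-1$ is identical.
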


The proofs of Theorem \ref{thm: f g r p p reverse} and Proposition \ref{prop:necessary critical line r<1} are closely related to those of Theorem \ref{thm: f g r p p} and Proposition \ref{prop:necessary critical line}, and will be treated simultaneously.

\subsection{Applications to sumsets}

Theorem \ref{thm: f g r p p reverse} has implications for lower bounding the size of the sumset for subsets of the discrete cube. Indeed, after applying the inequality (\ref{rev-y01}) to $f^{1/r}$ and $g^{1/r}$ and rising both sides to the power $r$ we see that 
\begin{align*}
    \| f^{1/r}*g^{1/r}\|_{r}^{r} \to \|f\overline{*}g\|_{1} \quad \text{as $r \to 0$},
\end{align*}
where 
\begin{align*}
    f\overline{*}g(x) =\sup_{u+v=x}f(u)g(v),
\end{align*}
and $\|f^{1/r}\|_{p_{r}}^{r} \|g^{1/r}\|_{p_{r}}^{r} \to \|f\|_{2/\log_{2}(3)} \|g\|_{2/\log_{2}(3)}$ as $r \to 0$. Thus, the {\em limiting} $r=0$ version of Theorem~\ref{thm: f g r p p reverse}  gives the following, which was obtained in \cite{BKIM}.
\begin{corollary}[Sharp Pr\'ekopa--Leindler on $\{0,1\}^{d}$]\label{cor011}
For all $f, g :\mathbb{Z}^{d} \to [0, \infty)$ supported on $\{0,1\}^{d}$ we have 
\begin{align*}
    \| f \overline{*} g\|_{1}\geq \|f\|_{2/\log_{2}(3)} \|g\|_{2/\log_{2}(3)}.
\end{align*}
\end{corollary}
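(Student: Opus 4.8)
The plan is to obtain Corollary \ref{cor011} as the $r \to 0^{+}$ limit of Theorem \ref{thm: f g r p p reverse}, fleshing out the heuristic in the paragraph preceding the statement. Fix $f,g:\Z^d\to[0,\infty)$ supported on $\{0,1\}^d$. For each $r\in(0,1)$ the functions $f^{1/r}$ and $g^{1/r}$ are again nonnegative and supported on $\{0,1\}^d$, so I would apply Theorem \ref{thm: f g r p p reverse} to them and raise both sides to the power $r$ to get
\[
\|f^{1/r}*g^{1/r}\|_r^{\,r}\;\geq\;\|f^{1/r}\|_{p_r}^{\,r}\,\|g^{1/r}\|_{p_r}^{\,r}.
\]
Since this holds for every $r\in(0,1)$, it then suffices to compute the limit of each side as $r\to 0^{+}$ and use that $\geq$ passes to the limit.

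For the left-hand side I would expand
\[
\|f^{1/r}*g^{1/r}\|_r^{\,r}=\sum_{x}\Big(\sum_{u+v=x}\big(f(u)g(v)\big)^{1/r}\Big)^{r},
\]
where the outer sum runs over the finite set $\{0,1,2\}^d$. For each fixed $x$ the inner expression is the $\ell^{1/r}$ norm of the finite vector $\big(f(u)g(v)\big)_{u+v=x}$, which converges to its $\ell^{\infty}$ norm $\max_{u+v=x}f(u)g(v)=f\overline{*}g(x)$ as $r\to 0^{+}$ (the exponent $1/r\to\infty$); this is immediate and remains valid when the maximum is $0$. Because the outer sum is finite, passing to the limit termwise gives $\sum_x f\overline{*}g(x)=\|f\overline{*}g\|_1$.

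For the right-hand side I would use $p_r=\frac{2r}{\log_2(2+2^r)}$, so that $p_r/r=\frac{2}{\log_2(2+2^r)}$ and
\[
\|f^{1/r}\|_{p_r}^{\,r}=\Big(\sum_x f(x)^{p_r/r}\Big)^{r/p_r}.
\]
As $r\to 0^{+}$ one has $2+2^r\to 3$, hence $p_r/r\to\frac{2}{\log_2 3}$ and $r/p_r\to\frac{\log_2 3}{2}$; since the sum over $x$ is finite, this yields $\|f^{1/r}\|_{p_r}^{\,r}\to\|f\|_{2/\log_2 3}$, and similarly for $g$. Combining the three limits closes the argument.

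The only point requiring care is the interchange of limits with sums, but this is automatic: $f$ and $g$ are supported on the finite set $\{0,1\}^d$, so every sum appearing is finite and all convergence is termwise. The main ``obstacle'' is therefore purely bookkeeping — verifying the elementary limits $\big(\sum_i a_i^{1/r}\big)^{r}\to\max_i a_i$ and $\log_2(2+2^r)\to\log_2 3$ and tracking the reciprocal exponents — rather than anything substantive.
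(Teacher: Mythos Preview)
Your proposal is correct and follows exactly the approach sketched in the paper: apply Theorem~\ref{thm: f g r p p reverse} to $f^{1/r}$ and $g^{1/r}$, raise to the $r$-th power, and let $r\to 0^{+}$, using that all sums are finite to justify the termwise limits. The paper states precisely these two limits in the paragraph before the corollary without further detail, so your write-up simply supplies the routine verification.
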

Given $A, B \subseteq \{0,1\}^d$, we define the \textit{sumset} $A+B$ as
\begin{equation*}
    A + B := \{a + b : a \in A, b \in B\}.
\end{equation*}
Estimating the size of $A+B$ in terms of the size of $A$ and $B$ is a classical Brunn-Minkowski type question. A sharp result on the hypercube was obtained in \cite{Woodall} and \cite{HS} independently, which can be recovered by our Corollary \ref{cor011} through the elementary identity $|A+B|=\| 1_{A} \overline{*} 1_{B}\|_{1}$.

\begin{corollary}[Sharp Brunn--Minkowski on $\{0,1\}^{d}$]\label{sumset01}
    Let $d \geq 1$. Then
    \begin{equation}\label{eq:sumset}
        |A+B| \geq |A|^{\frac{\log_23}{2}}|B|^{\frac{\log_23}{2}}
    \end{equation}
    for all $A,B \subseteq \{0,1\}^d$.
\end{corollary}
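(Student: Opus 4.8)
The plan is to deduce the statement directly from the sharp Pr\'ekopa--Leindler inequality of Corollary \ref{cor011} by specializing the functions $f$ and $g$ to indicators. First I would record the elementary identity $\mathbbm{1}_A \overline{*} \mathbbm{1}_B = \mathbbm{1}_{A+B}$. Indeed, for a fixed $x \in \Z^d$ the quantity $\mathbbm{1}_A \overline{*} \mathbbm{1}_B(x) = \sup_{u+v=x} \mathbbm{1}_A(u)\mathbbm{1}_B(v)$ takes only the values $0$ and $1$, and it equals $1$ precisely when there is at least one decomposition $x = u+v$ with $u \in A$ and $v \in B$, that is, exactly when $x \in A+B$. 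Summing over $x$ then gives $\|\mathbbm{1}_A \overline{*} \mathbbm{1}_B\|_1 = |A+B|$.

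Next I would apply Corollary \ref{cor011} with $f = \mathbbm{1}_A$ and $g = \mathbbm{1}_B$; both functions are supported in $\{0,1\}^d$ since $A, B \subseteq \{0,1\}^d$, so the hypothesis is met (note that the output $\mathbbm{1}_A \overline{*} \mathbbm{1}_B$ is allowed to be supported on the larger set $\{0,1,2\}^d$, which causes no difficulty). This yields
\[
|A+B| = \|\mathbbm{1}_A \overline{*} \mathbbm{1}_B\|_1 \geq \|\mathbbm{1}_A\|_{2/\log_2 3}\,\|\mathbbm{1}_B\|_{2/\log_2 3}.
\]
Finally I would evaluate the right-hand side: for any finite set $S$ and any $p \in (0,\infty)$ one has $\|\mathbbm{1}_S\|_p = |S|^{1/p}$, so with $p = 2/\log_2 3$ we get $\|\mathbbm{1}_A\|_{2/\log_2 3} = |A|^{\log_2 3 / 2}$ and likewise for $B$. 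Substituting gives $|A+B| \geq |A|^{\log_2 3/2}|B|^{\log_2 3/2}$, which is exactly \eqref{eq:sumset}.

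Given Corollary \ref{cor011}, there is essentially no obstacle: the argument is a formal specialization followed by two norm computations. The only genuine content is hidden inside Corollary \ref{cor011} itself, namely the sharp exponent $\log_2 3/2$, which the excerpt obtains as the $r \to 0$ limit of Theorem \ref{thm: f g r p p reverse}. If one instead wished to prove the sumset bound from scratch the difficulty would lie precisely in pinning down this optimal exponent, but no such work is needed here.
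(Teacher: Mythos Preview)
Your proposal is correct and follows exactly the route the paper indicates: apply Corollary~\ref{cor011} to $f=\mathbbm{1}_A$, $g=\mathbbm{1}_B$ and use the identity $|A+B|=\|\mathbbm{1}_A \overline{*} \mathbbm{1}_B\|_1$. There is nothing to add.
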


The power $\frac{\log_{2}(3)}{2}$ in (\ref{eq:sumset}) is sharp and it can be see by choosing $A=B=\{0,1\}^{d}$.  We refer the reader to \cite{Bourgain} or the recent preprint \cite{BKIM} for versions of \eqref{eq:sumset} involving more than two sets.

%\begin{align*}
%    |A+B|=\lim_{r\to0}\|1_A*1_B\|^r_{\ell^r}\geq \lim_{r\to0}|A|^{\frac{r}{p_r}}|B|^{\frac{r}{p_r}}=\lim_{r\to0}|A|^{\frac{\log_{2}(2+2^r)}{2}}|B|^{\frac{\log_{2}(2+2^r)}{2}}=|A|^{\frac{\log_23}{2}}|B|^{\frac{\log_23}{2}}
%\end{align*}

%\subsubsection*{Remark} At the final stage of preparation of this manuscript, we became aware that Crmari\v{c}, Kova\v{c} and Shiraki \cite{CKS} have also independently obtained Theorem \ref{thm: f g r p p}. In contrast to the arguments in \cite{CKS}, our proof is entirely analytical and does not rely on computer verifications in Mathematica. Moreover, it also applies {\em verbamit} to prove the sharp reverse inequality, i.e, Theorem~\ref{thm: f g r p p reverse} which has an important application to sumset bounds Corollary ~\ref{sumset01}.

\subsubsection*{Remark}  
While finalizing this manuscript, we learned that Crmari\v{c}, Kova\v{c}, and Shiraki \cite{CKS} have independently obtained Theorem~\ref{thm: f g r p p}. Their approach is distinct from ours and involves computer-assisted verification using Mathematica. In contrast, our proof is purely analytical and does not rely on computational tools. Furthermore, our method extends directly to establish the sharp reverse inequality stated in Theorem~\ref{thm: f g r p p reverse}, which plays a key role in deriving sumset bounds, as illustrated in Corollary~\ref{sumset01}.

\subsubsection*{Organisation of the paper} In Section \ref{sec:induction} we reduce convolution inequalities on $\{0,1\}^d$ to numerical inequalities. In Section \ref{sec:diagonal} we address the $p=q$ case and provide the proofs of Theorems \ref{thm: f g r p p} and \ref{thm: f g r p p reverse}. Finally, in Section \ref{sec:off-diagonal}, we address the $p \neq q$ case and provide the proof of the necessary condition in Propositions \ref{prop:necessary critical line} and \ref{prop:necessary critical line r<1} and of the bounds in Theorem \ref{thm:r=2 critical line} for $r=2$.

\subsubsection*{Acknowledgements}
The authors would like to thank Jaume de Dios for stimulating conversations. D.B. is supported by the grants RYC2020-029151-I and PID2022-140977NA-I00 funded by MICIU/AEI/10.13039/501100011033, ``ESF Investing in your future" and FEDER, UE. J.M. was partially supported by the AMS Stefan Bergman Fellowship and the Simons Foundation Grant $\# 453576$.  P.I. was supported in part by NSF CAREER grant DMS-2152401 and a Simons Fellowship.

\section{Reduction to numerical inequalities}\label{sec:induction}

We start by reducing the convolution inequalities to numerical inequalities via a standard induction argument. 

\begin{lemma}\label{lem:induction}
Let $d \geq 1$.
\begin{enumerate}[(i)]
    \item Let $r>1$, $p,q \leq r$ and assume that the inequality
    \begin{equation}\label{eq:numerical r>1}
    [ 1 + (x  + y)^r + (x y)^r  ]^{1/r} \leq (1+x^p)^{1/p} (1+y^q)^{1/q}
\end{equation}
holds for all $x,y \geq 0$. Then
\begin{equation}\label{eq:young general hypercube r>1}
    \|f \ast g\|_{\ell^r({\Z^d})} \leq \|f\|_{\ell^p(\Z^d)} \|g\|_{\ell^q(\Z^d)}
\end{equation}
holds for all $f,g:\Z^d \to \R$ supported in $\{0,1\}^d$.
\item  Let $0<r<1$, $p,q \geq r$ and assume that the inequality
    \begin{equation}\label{eq:numerical r<1}
    [ 1+ (x  + y)^r + (x y)^r  ]^{1/r} \geq (1+x^p)^{1/p} (1+y^q)^{1/q}
\end{equation}
holds for all $x,y \geq 0$. Then
\begin{equation}\label{eq:young general hypercube r<1}
    \|f \ast g\|_{\ell^r({\Z^d})} \geq \|f\|_{\ell^p(\Z^d)} \|g\|_{\ell^q(\Z^d)}
\end{equation}
holds for all $f,g:\Z^d \to [0,\infty)$ supported in $\{0,1\}^d$.
\end{enumerate}
\end{lemma}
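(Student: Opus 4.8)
The plan is to argue by induction on the dimension $d$, reducing the $d$-dimensional convolution inequality to the $(d-1)$-dimensional one by splitting each function according to its last coordinate, and then closing the induction with the hypothesised numerical inequality. For part (i) the pointwise bound $|f\ast g|\le |f|\ast|g|$ together with $\||f|\|_s=\|f\|_s$ lets me assume $f,g\ge 0$ from the outset; in part (ii) the functions are already nonnegative. The base case $d=0$ is trivial, since a function on $\{0,1\}^0$ is a scalar, convolution is multiplication, and all $\ell^s$-norms coincide, so both \eqref{eq:young general hypercube r>1} and \eqref{eq:young general hypercube r<1} hold with equality. For the inductive step I would write $x=(x',x_d)\in\Z^{d-1}\times\Z$ and set $f_j(x')=f(x',j)$, $g_j(x')=g(x',j)$ for $j\in\{0,1\}$, viewed as functions on $\Z^{d-1}$ supported in $\{0,1\}^{d-1}$. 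Since the only ways to write $x_d\in\{0,1,2\}$ as a sum of two elements of $\{0,1\}$ are $0=0+0$, $1=0+1=1+0$, $2=1+1$, the convolution decomposes (with all convolutions now on $\Z^{d-1}$) as
\[
(f\ast g)(\cdot,0)=f_0\ast g_0,\qquad (f\ast g)(\cdot,1)=f_0\ast g_1+f_1\ast g_0,\qquad (f\ast g)(\cdot,2)=f_1\ast g_1,
\]
and hence
\[
\|f\ast g\|_r^r=\|f_0\ast g_0\|_r^r+\|f_0\ast g_1+f_1\ast g_0\|_r^r+\|f_1\ast g_1\|_r^r .
\]

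Abbreviating $A_j=\|f_j\|_p$ and $C_j=\|g_j\|_q$, I would next apply the induction hypothesis in dimension $d-1$ to each of the four pieces $f_i\ast g_j$, together with a triangle inequality on the cross term. In part (i), where $r\ge 1$, this gives $\|f_i\ast g_j\|_r\le A_iC_j$ and, by Minkowski's inequality, $\|f_0\ast g_1+f_1\ast g_0\|_r\le A_0C_1+A_1C_0$, so that
\[
\|f\ast g\|_r^r\le (A_0C_0)^r+(A_0C_1+A_1C_0)^r+(A_1C_1)^r .
\]
In part (ii), where $0<r<1$, one uses instead the reverse Minkowski inequality $\|u+v\|_r\ge\|u\|_r+\|v\|_r$, valid for nonnegative sequences, to reverse every estimate and obtain the same expression with $\ge$ (using that $t\mapsto t^r$ is increasing on $[0,\infty)$ when raising the induction-hypothesis bounds to the power $r$). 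Since $\|f\|_p=(A_0^p+A_1^p)^{1/p}$ and $\|g\|_q=(C_0^q+C_1^q)^{1/q}$, in both cases the induction closes provided the homogeneous four-variable inequality
\[
(A_0C_0)^r+(A_0C_1+A_1C_0)^r+(A_1C_1)^r \le \big[(A_0^p+A_1^p)^{1/p}(C_0^q+C_1^q)^{1/q}\big]^r
\]
holds for all $A_j,C_j\ge 0$ in part (i), and the reverse holds in part (ii).

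It remains to identify this four-variable inequality with the hypothesised one. When $A_0,C_0>0$, setting $x=A_1/A_0$ and $y=C_1/C_0$ and factoring out $(A_0C_0)^r$ turns it into exactly \eqref{eq:numerical r>1} (resp. \eqref{eq:numerical r<1}) raised to the power $r$, which holds by assumption. For the degenerate configurations with $A_0=0$ or $C_0=0$, I would note that both sides are continuous on $[0,\infty)^4$ and that the region $\{A_0,C_0>0\}$ is dense, so the inequality extends from that region to all of $[0,\infty)^4$ by continuity.

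I expect the only genuinely delicate points to be bookkeeping ones: keeping the direction of every estimate correct in the reverse regime — in particular using reverse Minkowski rather than Minkowski on the cross term — and justifying the degenerate cases. The standing hypotheses $p,q\le r$ (resp. $p,q\ge r$) are precisely what make these degenerate limits consistent: they reflect the monotonicity of $s\mapsto\|\cdot\|_{\ell^s}$ that is already encoded in the numerical inequality as $x\to\infty$ (or $y\to\infty$), so beyond the continuity argument no separate treatment of the boundary is required.
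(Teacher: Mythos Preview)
Your proof is correct and follows essentially the same approach as the paper: induction on the dimension, splitting off the last coordinate, applying Minkowski (or reverse Minkowski for $r<1$) on the cross term, and reducing to the assumed numerical inequality. The only cosmetic differences are that the paper takes $d=1$ as the base case and handles the degenerate configurations $A_0=0$ or $C_0=0$ explicitly via the embedding $\ell^p\subseteq\ell^r$ (using $p,q\le r$), whereas you take $d=0$ as the base case and dispatch the degenerate cases by continuity; both are valid.
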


\begin{proof}
We only show (i); the argument for (ii) is entirely analogous. By the triangle inequality, it suffices to prove \eqref{eq:young general hypercube r>1} for non-negative functions. Assume $r>1$ and that the inequality \eqref{eq:numerical r>1} holds. We first show that this implies the case $d=1$ of \eqref{eq:young general hypercube r>1}.  Setting $f_0=f(0)$, $f_1=f(1)$, $g_0=g(0)$ and $g_1=g(1)$, the case $d=1$  of \eqref{eq:young general hypercube r>1} amounts to showing
\begin{equation}\label{eq:reduction to numerical 1}
    [ (f_0 g_0)^r + (f_0g_1 + f_1 g_0)^r +(f_1 g_1)^r ]^{1/r} \leq (f_0^p+f_1^p)^{1/p} (g_0^q+g_1^q)^{1/q}
\end{equation}
for all $f_0,f_1,g_0,g_1 \geq 0$.  If either $f_1=0$ or $g_1=0$, the resulting inequality follows by the embedding $\ell^q \subseteq \ell^r$ for $q \leq r$ or $\ell^p \subseteq \ell^r$ for $p \leq r$, respectively. Thus, it suffices to prove \eqref{eq:reduction to numerical 1} for $f_1, g_1 > 0$. Setting $x=f_0/f_1$, $y=g_0/g_1$ and dividing by $f_1$ and $g_1$, \eqref{eq:reduction to numerical 1} becomes
\begin{equation*}
    [ (x y)^r + (x  + y)^r +1 ]^{1/r} \leq (x^p+1)^{1/p} (y^q+1)^{1/q}
\end{equation*}
for all $x,y \geq 0$, which is \eqref{eq:numerical r>1}. This shows \eqref{eq:young general hypercube r>1} for $d=1$.

The case $d >1$ follows by induction. Assume that \eqref{eq:young general hypercube r>1} holds on $\Z^{d-1}$, for functions supported on $\{0,1\}^{d-1}$. For $u,v \in \{0,1\}^{d}$, let $u = (u', u_d)$ where $u' \in \{0,1\}^{d-1}$, and given $f:\Z^d \to [0,\infty)$ supported in $\{0,1\}^d$ we denote $f_{u_d}(u'):=f(u)$, which is supported in $\{0,1\}^{d-1}$ (and similarly for $v$ and $g$). For $r>1$, we have by Minkowski's inequality, induction hypothesis, and the case $d=1$ that
\begin{align*}
&\|f \ast g \|_{\ell^{r}(\Z^d)}  = \Big( \sum_{m \in \{0,1,2\}^d} \Big| \sum_{\substack{u,v \in \{0,1\}^d \\u+v=m}} f(u)g(v) \Big|^r \Big)^{1/r} \\
&  = \Big( \sum_{m_d \in \{0,1,2\}} \sum_{m' \in \{0,1,2\}^{d-1}} \Big|\sum_{\substack{u_d,v_d \in \{0,1\} \\u_d+v_d=m_d}} \sum_{\substack{u',v' \in \{0,1\}^{d-1} \\u'+v'=m'}} f_{u_d}(u')g_{v_d}(v') \Big|^r \Big)^{1/r} \\
& \leq \Big( \sum_{m_d \in \{0,1,2\}} \Big[\sum_{\substack{u_d,v_d \in \{0,1\} \\u_d+v_d=m_d}}  \Big( \sum_{m' \in \{0,1,2\}^{d-1}} \Big|\sum_{\substack{u',v' \in \{0,1\}^{d-1} \\u'+v'=m'}} f_{u_d}(u')g_{v_d}(v') \Big|^r\Big)^{1/r} \Big]^{r}  \Big)^{1/r} \\
& \leq \Big( \sum_{m_d \in \{0,1,2\}} \Big[\sum_{\substack{u_d,v_d \in \{0,1\} \\u_d+v_d=m_d}}  \|f(\cdot, u_d)\|_{\ell^p(\Z^{d-1})} \|g(\cdot, v_d)\|_{\ell^q(\Z^{d-1})} \Big]^{r}  \Big)^{1/r} \\
& \leq \|f\|_{\ell^p(\Z^d)} \|g\|_{\ell^p(\Z^d)}
\end{align*}
which establishes \eqref{eq:young general hypercube r>1}.  For $r<1$, the argument is similar, using now the reversed inequalities as hypotheses and the fact that Minkowski's inequality also goes in the reverse direction.
\end{proof}

\section{The diagonal case}\label{sec:diagonal}

In this section we prove Theorems \ref{thm: f g r p p} and \ref{thm: f g r p p reverse} in conjunction. For a given $r >0$, we set $p_r:=\frac{2r}{\log_2(2+2^r)}$. We note that the case $r=1$ is immediate, since $p_1=1$ and the inequality actually becomes an equality. Thus, we may assume $r \neq 1$ throughout this section. Observe that if $r>1$, then $p_r < r$, whilst for $0<r<1$ we have $r < p_r$; this will play an important role in forthcoming arguments.

By Lemma \ref{lem:induction}, it suffices to verify the numerical inequalities \eqref{eq:numerical r>1} and \eqref{eq:numerical r<1} for $p=q=p_r$.  We first verify them when $x=y$. 

\begin{lemma}\label{lem:numerical f f p p}
    Let $r >0$, $r \neq 1$ and $p_r:=\frac{2r}{\log_2(2+2^r)}$. 
    Then the following hold.
    \begin{enumerate}[(i)]
        \item If $r > 1$, then
            \begin{equation}\label{eq:numerical f*f p p}
                (1+(2x)^{r}+x^{2r})^{1/r}\leq (1+x^{p_r})^{\frac{2}{p_r}}        
            \end{equation}
            for all $x \geq 0$.
        \item If $r < 1$, then
            \begin{equation}\label{eq:numerical f*f p p r < 1}
                (1+(2x)^{r}+x^{2r})^{1/r}\geq (1+x^{p_r})^{\frac{2}{p_r}}        
            \end{equation}
            for all $x \geq 0$.
    \end{enumerate}
\end{lemma}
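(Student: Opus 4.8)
The plan is to collapse both parts into a single one–variable sign statement and then determine the sign by counting zeros of an exponential sum. First I would dispose of $x=0$ (both sides equal $1$) and assume $x>0$. Raising \eqref{eq:numerical f*f p p} and its reverse \eqref{eq:numerical f*f p p r < 1} to the power $r$ turns them into
\[
1+(2x)^r+x^{2r} \le (1+x^{p_r})^{2r/p_r}\quad (r>1),\qquad\text{reverse for }0<r<1 .
\]
Setting $\alpha:=\tfrac{r}{p_r}=\tfrac{\log_2(2+2^r)}{2}$ and substituting $a:=x^{p_r}$, so that $x^r=a^{\alpha}$, $(2x)^r=2^r a^\alpha$, $2r/p_r=2\alpha$, and the defining relation $4^\alpha=2^{2\alpha}=2+2^r$ gives $2^r=4^\alpha-2$, the inequality becomes
\[
1+(4^\alpha-2)\,a^\alpha+a^{2\alpha}\le (1+a)^{2\alpha}\quad(\alpha>1),\qquad\text{reverse for }\alpha<1,
\]
for all $a>0$, where $\alpha>1\Leftrightarrow r>1$ and $\alpha<1\Leftrightarrow r<1$; note $r>0$ forces $4^\alpha>3$, i.e. $\alpha>\tfrac{\log_2 3}{2}$.

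Next I would symmetrize. The inequality is invariant under $a\mapsto 1/a$ (multiply through by $a^{2\alpha}$), so I may take $a\ge 1$ and set $p:=\sqrt a\ge 1$. Dividing by $a^\alpha$ and using $a+2+\tfrac1a=(p+\tfrac1p)^2$ together with $4^\alpha=2^{2\alpha}$, the claim is equivalent to the single sign statement
\[
\Phi_p(n):=\Big(p+\tfrac1p\Big)^{n}-p^{\,n}-p^{-n}-2^{\,n}+2,\qquad n:=2\alpha,
\]
being $\ge 0$ for $n>2$ and $\le 0$ for $n\in(\log_2 3,\,2)$ (the relevant ranges for $r>1$ and $0<r<1$, respectively). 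A direct computation gives $\Phi_p(0)=\Phi_p(1)=\Phi_p(2)=0$ for every $p$, and $\Phi_1\equiv 0$; so three zeros of $n\mapsto\Phi_p(n)$ are available for free.

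The crux is to show that for $p>1$ there are \emph{no other} zeros. I would view $\Phi_p$ as an exponential sum $\sum_i c_i b_i^{\,n}$ with bases $b_i\in\{p+\tfrac1p,\;p,\;\tfrac1p,\;2,\;1\}$ and coefficients $\{1,-1,-1,-1,2\}$, and invoke the classical Descartes-type bound for Dirichlet polynomials: $\sum_{i=1}^m c_i e^{\lambda_i n}$ with $\lambda_1<\dots<\lambda_m$ has at most as many real zeros (counted with multiplicity) as there are sign changes in $(c_1,\dots,c_m)$. For any $p>1$ the unique largest base is $p+\tfrac1p$ (sign $+$), the base $1$ carries $+2$, and $\tfrac1p,p,2$ all carry $-1$; hence listing the bases in increasing order yields the sign pattern $(-,+,-,-,+)$, with the two interior minus signs possibly coalescing when $p=2$, giving exactly three sign changes. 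Thus $\Phi_p$ has at most three real zeros, so $0,1,2$ are all of them and each is simple. Since $p+\tfrac1p>2$, the leading term forces $\Phi_p(n)\to+\infty$ as $n\to\infty$, whence $\Phi_p>0$ on $(2,\infty)$; simplicity of the zero at $n=2$ then forces a sign change, so $\Phi_p<0$ on $(1,2)\supset(\log_2 3,2)$. These are precisely the two statements needed, establishing (i) and (ii). I expect the main obstacle to be exactly this zero-count — the whole argument rests on the sign-change pattern being robustly three for every $p>1$, which must be verified against the possible orderings of $p$ relative to $2$ (all giving the same pattern, since $p$ and $2$ share the sign $-$), and on invoking the exponential-sum zero bound rather than attempting the far messier direct calculus of $\Phi_p'$.
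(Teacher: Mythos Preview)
Your proof is correct and takes a genuinely different route from the paper's. The paper fixes $r$ and studies the difference of the two sides as a function of $w=x^r$ on $[0,1]$: it computes the derivative, reduces to counting zeros of an auxiliary function $g(w)=2^r w^{p_r/r}+2w^{p_r/r-1}-2^r-2w$, and then uses a second-derivative analysis (locating the unique inflection point $w_r^*$ and checking whether it lies in $(0,1)$ according to $r\gtrless 1$) to conclude that $g$ has at most one zero in $(0,1)$. Your approach instead freezes the spatial variable $p=\sqrt{x^{p_r}}$ and lets the exponent $n=2r/p_r$ vary, recognising the resulting expression $\Phi_p(n)=(p+1/p)^n-p^n-p^{-n}-2^n+2$ as a Dirichlet polynomial with three ``free'' zeros at $n=0,1,2$ and exactly three coefficient sign changes, so that the Descartes-type zero bound finishes the argument without any derivative computations. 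What you gain is a cleaner, essentially combinatorial argument that treats both regimes $r\gtrless1$ uniformly (and in fact gives $\Phi_p<0$ on the whole of $(1,2)$, more than you need); the cost is reliance on the sign-change bound for exponential sums, which, while classical, should be cited or proved (the Rolle-based induction you allude to is straightforward but not entirely standard). The paper's argument is longer and splits into cases but is fully self-contained elementary calculus.
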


\begin{proof}
It suffices to prove both \eqref{eq:numerical f*f p p} and \eqref{eq:numerical f*f p p r < 1} only for $x \in [0,1]$. If $x>1$, we can divide the inequalities by $x^2$ and perform the change of variables $1/x = y$. Then the resulting inequalities are the same but restricted to $[0,1]$.
    
Let $r>0, r \neq 1$. Raising \eqref{eq:numerical f*f p p} and \eqref{eq:numerical f*f p p r < 1} to the power $r$, taking logarithms, and setting $w=x^{r}$, proving the lemma is equivalent to determining that the function 
\begin{equation*}
    f(w):=\frac{2r}{p_r} \log (1+w^{p_r/r}) - \log(1+2^r w + w^2)
\end{equation*}
has constant sign (non-negative if $r>1$ and non-positive if $r <1$) on $[0,1]$. 
Clearly $f(0)=f(1)=0$ and
\begin{equation*}
    f'(w)= \frac{2 w^{p_r/r -1}}{1+w^{p_r/r}} - \frac{2^r + 2w}{1+2^r w + w^2} = \frac{2^r w^{p_r/r} + 2w^{p_r/r - 1} -2^r -2w}{(1+w^{p_r/r})(1+2^r w + w^2)}.
\end{equation*}
Note $f'(1)=0$. Since $f'(0)=+\infty$ if $r>1$ and $f'(0)=-2^r <0$ if $r<1$, it suffices to prove that $f'$ has at most one zero in $(0,1)$. The zeros of $f'$ in $(0,1)$ are equivalent to those of
\begin{equation*}
    g(w):=2^r w^{p_r/r} + 2w^{p_r/r - 1} -2^r -2w.
\end{equation*}

By direct computation,
\begin{align*}
    g'(w)&=\frac{2^{r}p_r}{r} w^{p_r/r -1} + 2\Big(\frac{p_r}{r}-1\Big)w^{p_r/r-2} - 2,
\\
    g''(w)&=2 \Big(\frac{p_r}{r}-1\Big)w^{p_r/r - 3} \Big[ \frac{2^{r-1}p_r}{r} w + \Big( \frac{p_r}{r} -2 \Big) \Big].
\end{align*}
Observe that $g''(w_r^*)=0$ if and only if $w_r^*=2^{-r+1}(2r/p_r-1)$. Clearly $w_r^*>0$. Moreover, $w_r^* < 1$ if and only if $r>1$. Note that this is equivalent to showing that
\begin{equation}\label{eq:w_r^*}
    \log_2(2+2^r) < 2^{r-1} +1
\end{equation}
if and only if $r>1$. Setting $b:=2^{r-1}$, this is equivalent to showing $h(b):=2^b-b-1>0$. Note that $h(1)=0$, and that $h'(b) \geq 0$ if and only if $b \geq \log_2(1/\log(2))$. Thus, $h(b)>0$ for $b >1$, so $w_r^* < 1$ if $r >1$. On the other hand, if $0<r<1$, we have $1/2<b<1$. Since $h(1/2)<0$, $h(1)=0$ and $h$ has only one critical point in $(1/2,1)$, we obtain $h(b)<0$ for $1/2<b<1$, and thus $w_r^*>1$ if $r<1$.

From the above considerations, we can readily deduce that $g$ has at most one zero on $(0,1)$ if $r<1$. Since $w_r^*>1$, the function $g''$ has constant sign on $(0,1)$. Noting that $g''(0)=-\infty$, we have that $g$ is concave. But $g(0)=-2^r$ and $g(1)=0$. This guarantees the claim and concludes the proof of (ii). 

We next consider the case $r>1$, for which $w_r^* \in (0,1)$. Indeed, note that $g''(w)>0$ if $w < w_{r}^*$ and $g''(w)<0$ if $w > w_r^*$. 
Combining this with the fact that $g'(0)=-\infty$ and $g'(1)>0$ (note that this is equivalent to \eqref{eq:w_r^*}), we have that $g'$ has exactly one zero on $(0,1)$. %, which we call $w_r^{**}$. ; note that $0 < w_r^{**} < w_r^*$. \db{it is irrelevant perhaps we can clean it up a bit?}
But $g(0)=+\infty$ and $g(1)=0$, and in view of the sign of $g'$ (with only one sign change from $-$ to $+$), we have that $g$ has %there exists only one $0 <w_r^{***}< w_r^{**}$ such that $g(w_r^{***})=0$. This means that $g$ has 
exactly one zero in $(0,1)$, which concludes the proof of (i).
\end{proof}

We next show that it is possible to reduce the numerical inequalities \eqref{eq:numerical r>1} and \eqref{eq:numerical r<1} to the $x=y$ case.

\begin{proposition}\label{prop:reduction to diagonal}
    Let $r>0$, $r \neq 1$, and $p_r:=\frac{2r}{\log_2(2+2^r)}$. Consider the function
    \begin{equation}\label{eq:H def}
        H_r(x,y) :=1 + (x+y)^r + (xy)^r - [(1+x^{p_r})(1+y^{p_r})]^{\frac{r}{p_r}} 
    \end{equation}
    for all $x,y \geq 0$. The following hold.
    \begin{enumerate}[(i)]
        \item If $r > 1$, then $H_r(x,y) \leq H_r(\sqrt{xy},\sqrt{xy})$ for all $x,y \geq 0$.
        \item If $r < 1$, then $H_r(x,y) \geq H_r(\sqrt{xy},\sqrt{xy})$ for all $x,y \geq 0$.
    \end{enumerate}
\end{proposition}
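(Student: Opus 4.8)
The plan is to prove that on each hyperbola $\{xy = t\}$ the quantity $H_r$ attains its extreme value at the diagonal point, and then combine this with the $x=y$ case already settled in Lemma~\ref{lem:numerical f f p p}. Write $m := p_r$ and recall $2^{2r/m} = 2+2^r$, together with $m<r$ for $r>1$ and $m>r$ for $r<1$. Fixing $t = xy$, I would use $u = x+y$ as the single free parameter along the hyperbola (so $x,y$ are the roots of $z^2-uz+t$, $u\ge 2\sqrt t$, with $u=2\sqrt t$ the diagonal). Since the constant $1$ and $(xy)^r=t^r$ do not move and $(1+x^m)(1+y^m)=1+t^m+(x^m+y^m)$, only $(x+y)^r$ and $x^m+y^m$ vary. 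Using $\frac{d}{du}(x^m+y^m)=m\frac{x^m-y^m}{x-y}$, a direct computation gives, with $P:=(1+x^m)(1+y^m)$,
\[
\partial_u H_r = r(x+y)^{r-1} - rP^{(r-m)/m}\,\frac{x^m-y^m}{x-y}.
\]
Thus $\partial_u H_r\le 0$ for $r>1$ (resp. $\ge 0$ for $r<1$) is equivalent, for $x>y\ge 0$, to the pointwise inequality $(x+y)^{r-1}(x-y)\le P^{(r-m)/m}(x^m-y^m)$ for $r>1$ and the reverse for $r<1$; monotonicity of $H_r$ in $u$ then forces the extremum at $u=2\sqrt t$, which is exactly the assertion.

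The decisive step is to divide by $x^m-y^m>0$ and raise to the power $m/(r-m)$. This exponent is positive for $r>1$ and negative for $r<1$, which precisely compensates the opposite inequality directions, so \emph{both} cases collapse to the single inequality
\[
\Big[\tfrac{(x+y)^{r-1}(x-y)}{x^m-y^m}\Big]^{m/(r-m)} \le (1+x^m)(1+y^m).
\]
The left-hand side is homogeneous of degree $m$. Writing $X=x^m$ and $\rho=y/x\in[0,1)$, the right-hand side is $(1+X)(1+\rho^m X)$, so the inequality becomes a quadratic condition in $X$,
\[
\rho^m X^2 + \big(1+\rho^m - g(\rho)\big)X + 1 \ge 0, \qquad g(\rho) := \Big[\tfrac{(1+\rho)^{r-1}(1-\rho)}{1-\rho^m}\Big]^{m/(r-m)}.
\]
Because the leading coefficient $\rho^m$ and the constant term are positive, a vertex/discriminant analysis shows that nonnegativity for all $X>0$ is equivalent to the clean one-variable bound $g(\rho)\le (1+\rho^{m/2})^2$.

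Raising this to $(r-m)/m$ produces the single-variable inequality $\frac{(1+\rho)^{r-1}(1-\rho)}{1-\rho^m}\le (1+\rho^{m/2})^{2(r-m)/m}$ for $r>1$ (reverse for $r<1$). Passing to logarithms, I would study
\[
\Phi(\rho):=\tfrac{2(r-m)}{m}\log(1+\rho^{m/2}) - (r-1)\log(1+\rho) - \log(1-\rho) + \log(1-\rho^m),
\]
which must be $\ge 0$ for $r>1$ and $\le 0$ for $r<1$ on $[0,1)$. One checks $\Phi(0)=0$, and, using $\lim_{\rho\to 1}\frac{1-\rho^m}{1-\rho}=m$ together with $\frac{2r}{m}=\log_2(2+2^r)$, that $\Phi(\rho)\to \log\frac{m(2+2^r)}{2^{r+1}}$ as $\rho\to 1^-$. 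Hence the endpoint has the correct sign exactly when $p_r\ge \frac{2^{r+1}}{2+2^r}$ for $r>1$ (and $\le$ for $r<1$); this is equivalent to the auxiliary function $\eta(r):=r(2+2^r)-2^r\log_2(2+2^r)$ having the sign of $r-1$, which follows from the definition of $p_r$ by an elementary estimate of the same flavour as the one used for \eqref{eq:w_r^*}.

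The remaining task --- showing that $\Phi$ keeps a constant sign throughout $(0,1)$ --- is the analytic heart and the main obstacle, since $\Phi$ vanishes at $\rho=0$ and is tight at $\rho\to 1$ (the endpoint value degenerates to $0$ as $r\to\infty$, resp. $r\to 1$), so no crude estimate can work. I would mirror the proof of Lemma~\ref{lem:numerical f f p p}: compute $\Phi'$, reduce its zeros in $(0,1)$ to those of a simpler auxiliary function obtained by clearing denominators, and bound the number of its sign changes via the second derivative, invoking $2^{2r/m}=2+2^r$ at the critical computation. The boundary behaviour fixes the sign near $0$ --- for $r>1$ the term $(r-m)\rho^{m/2-1}$ (with $m<2$) forces $\Phi'(0^+)=+\infty$ --- and the endpoint computation pins down the other end, so the genuine difficulty is ruling out an interior sign change. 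The case $r<1$ runs in exact parallel with all inequalities reversed (now $m>r$, $m/(r-m)<0$, and $\Phi'(0^+)=-\infty$), which is why the two parts of the Proposition are handled together.
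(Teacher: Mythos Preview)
Your reduction to the one-variable inequality is correct and is exactly the route the paper takes: the paper parametrizes each hyperbola by $x=Ae^{-s}$, $y=Ae^{s}$ and studies $x\partial_xH-y\partial_yH$, which is a constant multiple of your $\partial_uH_r$ (indeed $\partial_u=\tfrac{x\partial_x-y\partial_y}{x-y}$ when $xy$ is held fixed). The contradiction argument there leads to the same quadratic in $z=x^{-p}$ that you write in $X=x^{m}$, and the discriminant condition is precisely your bound $g(\rho)\le(1+\rho^{m/2})^{2}$, equivalently the paper's $(1+t^{p/2})^{2}>Q(t)$. So up to this point the two arguments coincide.

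The genuine gap is the last step. Your plan to ``mirror Lemma~\ref{lem:numerical f f p p}'' by differentiating $\Phi$ directly misses the move that actually makes the paper's argument close: using $p_r<2$ (valid for all $r>0$) to replace $1+\rho^{m/2}$ by $1+\rho$ on $(0,1)$, with the inequality going the right way in each regime because the outer exponent $2(r-m)/m$ changes sign at $r=1$. This collapses your four-term $\Phi'$ to the paper's three-term $g'$, whose numerator factors as $(2-p)t\,h(t)$; the paper then still needs the full chain $h(1)=0\Rightarrow$ study $h'$, write $h'=t^{p-2}k$, $k(1)=0\Rightarrow$ study $k'$, and only at that level does an elementary comparison ($t^{p}\le t$ for $r>1$, $t^{2}\le t^{p+1}$ for $r<1$) finish things. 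That is two more layers than in Lemma~\ref{lem:numerical f f p p}, so your sketch both omits the key simplification and underestimates the depth of the derivative analysis; without the $p_r<2$ substitution, your $\Phi'$ retains the awkward $\rho^{m/2-1}/(1+\rho^{m/2})$ term and there is no indication the zero-counting goes through. Two smaller loose ends: the boundary $xy=0$ needs a separate one-line check (the paper does this via $p_r\lessgtr r$), and your endpoint claim $\Phi(1^-)=\log\!\big(m(2+2^{r})/2^{r+1}\big)\gtrless 0$ is correct but is itself a nontrivial elementary inequality that you have not proved.
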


\begin{proof}
For ease of notation, we set $H \equiv H_r$ and $p \equiv p_r$. We first claim that for all $x,y \geq 0$ with $x y >0$, we have
\begin{equation}\label{eq:H key}
    x \partial_x H(x,y) - y \partial_y H(x,y) \neq 0 \quad \text{if $x \neq y$}
\end{equation}
and show how to deduce the Proposition from this. Note that
\begin{align*}
&\frac{x\partial_x H(x,y)}{r} =  x(x+y)^{r-1}+y^rx^r - x^p(1+y^p)^\frac{r}{p}(1+x^p)^{\frac{r}{p}-1}\\
&\frac{y\partial_y H(x,y)}{r} = y(x+y)^{r-1} + x^ry^r - y^p(1+x^p)^\frac{r}{p}(1+y^p)^{\frac{r}{p}-1}.
\end{align*}
Thus 
\begin{equation}\label{eq:xH_x-yH_y}
    \frac{x \partial_x H - y\partial_y H}{r}=(x-y)(x+y)^{r-1} - (1+y^p)^{\frac{r}{p}}(1+x^p)^\frac{r}{p} \Big( \frac{x^p - y^p}{(1+x^p)(1+y^p)}\Big).
\end{equation}
Evaluating \eqref{eq:xH_x-yH_y} at $(x,y)=(0,1)$ we obtain
\begin{equation}\label{eq:xH_x-yH_y at (0,1)}
    x \partial_x H(x,y) - y\partial_yH(x,y)|_{(x,y)=(0,1)}=-r(1-2^{\frac{r}{p}-1}).
\end{equation}
Since $p < r$ if $r>1$, the expression \eqref{eq:xH_x-yH_y at (0,1)} is positive if $r>1$. On the other hand, since $p>r$ if $r < 1$, the expression \eqref{eq:xH_x-yH_y at (0,1)} is negative if $r <1$. Combining this with the claim \eqref{eq:H key} we obtain
\begin{equation}\label{eq:sign for xH_x-yH_y}
    \begin{cases}
        \text{ if $r > 1$, then $x\partial_x H(x,y)-y \partial_y H(x,y)>0$ for all $y>x$,} \\
        \text{ if $r < 1$, then $x \partial_x H(x,y)-y\partial_y H(x,y)<0$ for all $y>x$.}
    \end{cases}
\end{equation}
Now, for any given $0 <x < y $ we can find $A>0$ and $t>0$ such that $x=Ae^{-t}$ and $y=Ae^t$; note that $A=\sqrt{xy}$. Defining
\begin{equation*}
    R(s):=H(A e^{-s}, A e^s)
\end{equation*}
for $s  \geq 0$ we have
\begin{equation*}
    R'(s)=-Ae^{-s} H_x(A e^{-s},Ae^s) + Ae^s H_y (Ae^{-s}, Ae^s).
\end{equation*}
By \eqref{eq:sign for xH_x-yH_y}, for $s>0$ we have $R'(s) < 0$ if $r >1$ and $R'(s)>0$ if $r <1$. Consequently, for $s>0$ we have $R(s) \leq R(0)$ for $r > 1$ and $R(s) \geq R(0)$ for $r<1$. Since $A=\sqrt{xy}$, we conclude that (i) and (ii) in the statement of the proposition holds for all $0 < x < y $, and because the roles of $x$ and $y$ are symmetric and the case $x=y$ is immediate, it happens for all $x,y>0$. Since $H(0,y)=H(y,0)=1+y^r - (1+y^p)^{r/p}$ and $H(0,0)=0$, we conclude that (i) and (ii) holds for all $x,y \geq 0$, using that $p < r$ if $r >1$ and $p>r$ if $r <1$.

We next turn to the claim \eqref{eq:H key}, which will be proven by contradiction. Assume that $x \partial_x H(x,y)-y \partial_y H (x,y)=0$ for some $y>x>0$. In view of \eqref{eq:xH_x-yH_y}, this implies that 
\begin{align*}
     \frac{(x-y)(x+y)^{r-1}}{x^p - y^p} = (1+y^p)^{\frac{r}{p} - 1}(1+x^p)^{\frac{r}{p} - 1}
\end{align*}
for some $y > x >0$, which raised to the power $\frac{p}{r-p}$ yields 
\begin{equation}\label{eq:different 1}
    \left(\frac{(x-y)(x+y)^{r-1}}{x^p - y^p}\right)^\frac{p}{r-p} = (1+x^p)(1+y^p)
\end{equation}
for some $y> x>0$. Writing $y = tx$, the assumption \eqref{eq:different 1} can be rewritten as 
\begin{align*}
    x^p\left( \frac{ (t+1)^{r-1} (t-1)}{t^p - 1}\right)^\frac{p}{r-p} &= 1 + t^px^{2p} + x^p(1+t^p)
\end{align*}
for some $t>1$. 
Bringing everything to the right-hand side and multiplying by $x^{-2p}$, the above reads as
\begin{equation}\label{eq:different 2}
        x^{-2p} + x^{-p}\Big[1+t^p - \Big( \frac{ (t+1)^{r-1} (t-1)}{t^p - 1}\Big)^\frac{p}{r-p} \Big] + t^p =0
\end{equation}
for some $t>1$. Writing 
$z = x^{-p}$ and $Q(t): = \Big(\dfrac{ (t+1)^{r-1} (t-1)}{t^p - 1}\Big)^\frac{p}{r-p}$ in \eqref{eq:different 2}, we obtain the quadratic equation (in $z$)
\begin{equation*}%\label{quadratic-z}
    f_t(z):=z^2 + z(1+t^p - Q(t)) + t^p = 0.
\end{equation*}
for some $t>1$. However, we will show that  $f_t(z)>0$ for all $z \geq 0$ and all $t>1$, which gives a contradiction and thus proves the claim \eqref{eq:H key}.

Note that the parabola $f_t(z)$ is positive at $z=0$ and $z=\infty$. Thus, to see that $f_t(z)>0$ for all $z>0$ and all $t>1$, it is enough to check positivity at the critical point  
\[z^*(t) := \frac{-(t^p + 1 - Q(t))}{2}.\]
Since we are only interested in $z>0$, we will assume
\begin{equation}\label{eq:sign of Q(t)}
t^p + 1 - Q(t) < 0,
\end{equation}
as otherwise the positivity of $f_t(z)$ for $z>0$ is immediate. Thus, we are left with proving 
\begin{equation}\label{eq:different 3}
    f_t(z^*(t))=\frac{-(t^p + 1 - Q(t))^2}{4} + t^p > 0
\end{equation}
for all $t>1$ satisfying \eqref{eq:sign of Q(t)}. Taking square roots, this is equivalent to showing that
\begin{equation*}
t^\frac{p}{2} - \frac{ |t^p + 1-Q(t)|}{2} > 0
\end{equation*}
and in view of \eqref{eq:sign of Q(t)} we have reduced \eqref{eq:different 3} to showing 
\begin{equation*}
    (1+t^\frac{p}{2})^2 > Q(t)=\left[ \frac{(t+1)^{r-1} (t-1)}{t^p - 1} \right]^{\frac{p}{r-p}}.
\end{equation*}
Taking $(r-p)/p$ roots, and noting that $r>p$ for $r>1$ and $r < p$ for $r<1$, it suffices to prove
\begin{equation}\label{eq:varphi goal}
    \begin{cases}
        \varphi(t) > 1  \text{ for all $t >1$ if $r >1$} \\
        \varphi(t) < 1 \text{ for all $t>1$ if  $r <1$}
    \end{cases}
\end{equation}
where 
\[\varphi(t) := \frac{(1+t^\frac{p}{2})^{\frac{2(r-p)}{p}} (t^p-1)}{(1+t)^{r-1}(t-1)}.  \]
Since $\varphi(t) = \varphi(1/t)$, it is enough to prove \eqref{eq:varphi goal} for all $t\in (0,1)$.

Next, observe that $p=\frac{2r}{\log_2(2+2^r)} < 2$ for all $r>0$. Thus, we have $t^{p/2} > t$ for $t \in (0, 1)$, which yields the bounds 
\begin{equation}\label{eq:varphi a>1}
    \varphi(t) > \frac{(t^p-1)(1+t)^{\frac{2(r-p)}{p}}}{(1+t)^{r-1}(t-1)}  \quad \text{ for $r >1$}
\end{equation}
and
\begin{equation}\label{eq:varphi a<1}
\varphi(t) < \frac{(t^p-1)(1+t)^{\frac{2(r-p)}{p}}}{(1+t)^{r-1}(t-1)} \quad \text{ for $r <1$}
\end{equation}
for all $0 < t < 1$. Define the function $g$ as the logarithm of the right-hand sides above, that is
\[g(t) := \log \left( \frac{(1-t^p)(1+t)^{\frac{2r}{p} - r - 1}}{1-t} \right) \] 
for $t \in [0, 1)$. By \eqref{eq:varphi a>1} and \eqref{eq:varphi a<1}, proving \eqref{eq:varphi goal} reduces to determining the sign of $g$. Since $g(0)=0$, it then suffices to show
\begin{equation}\label{eq:g' goal}
    \begin{cases}
        g'(t) \geq 0 \text{ for all } t \in (0,1) \text{ if $r >1$} \\
        g'(t) \leq 0 \text{ for all } t \in (0,1) \text{ if $r <1$}.
    \end{cases}
\end{equation}

We can rewrite
    \[g(t) = \log(1-t^p ) + \Big( \frac{2r}{p} - r - 1\Big) \log(1+t) - \log(1-t)\]
and therefore
    \begin{align*}
        g'(t) &= -\frac{p t^{p-1}}{1-t^p} + \Big( \frac{2r - rp - p}{p} \Big)  \frac{1}{1+t}  + \frac{1}{1-t} \\
        &= \frac{-p^2 t^{p-1} (1-t^2) + (2r-rp-p)(1-t^p)(1-t) + p(1-t^p)(1+t)}{p(1-t^p)(1-t^2)}.
    \end{align*}
    The denominator of $g'(t)$ is positive for all $t \in (0,1)$. Thus, the sign of $g'$ is determined by that of its numerator. We note the relation 
    \begin{align*}
        & -p^2 t^{p-1} (1-t^2) + (2r-rp-p)(1-t^p)(1-t) + p(1-t^p)(1+t) \\
        &= (r-p)(2-p) t^{p+1} - r(2-p)  t^p - p^2  t^{p-1} + [p(r+2) - 2r]  t + r(2-p)
    \end{align*}
    which allows to write the numerator of $g'(t)$ as $(2-p)t h(t)$, where
    \[h(t) := (r-p) t^{p} - r t^{p-1} - \Big( \frac{p^2}{2-p} \Big)  t^{p-2} + \Big( \frac{p(r+2) - 2r}{2-p} \Big)   + rt^{-1}\]
    is defined for $t \in (0,1)$. Since $p <2$, the sign of $g'$ is the same as that of $h$. Note that $h(1) = 0$, so to prove \eqref{eq:g' goal} it suffices to show that 
    \begin{equation}\label{eq:h goal}
    \begin{cases}
        h'(t) \leq 0 \text{ for all } t \in (0,1) \text{ if $r >1$} \\
        h'(t) \geq 0 \text{ for all } t \in (0,1) \text{ if $r <1$}.
    \end{cases}
\end{equation}
    We have 
    \begin{align*}
        h'(t) &= p(r-p)  t^{p-1} + r(1-p)  t^{p-2} + p^2  t^{p-3} - r t^{-2} \\
        &= t^{p-2}\Big( p(r-p)  t + r(1-p) + \frac{p^2}{t} - \frac{r}{t^p} \Big) = : t^{p-2}k(t).
    \end{align*}
Observe that $k(1) = 0$, so to prove \eqref{eq:h goal} it suffices to show
\begin{equation}\label{eq:k' goal}
    \begin{cases}
        k'(t) > 0 \text{ for all } t \in (0,1) \text{ if $r >1$} \\
        k'(t) < 0 \text{ for all } t \in (0,1) \text{ if $r <1$}.
    \end{cases}
\end{equation}
We note
\begin{align*}
    k'(t) &= p(r-p) - \frac{p^2}{t^2} + \frac{rp}{t^{p+1}} = p\left( r - p + \frac{r}{t^{p+1}} - \frac{p}{t^2} \right).
\end{align*}
Next, recall that $1 \leq p \leq 2$ for $r>1$. Thus, we have $t^p \leq t$ for  $t \in (0, 1)$ and $r>1$ and we obtain
\begin{align*}
    k'(t) 
    &\geq p \Big( r - p + \frac{r}{t^2} - \frac{p}{t^2} \Big) = p(r-p)\Big(1 + \frac{1}{t^2} \Big) > 0
\end{align*}
using that  $p < r$ for $r>1$. This verifies the first part of \eqref{eq:k' goal}. For $r<1$, we have $p<1$ and therefore $t^2 \leq t^{p+1}$ for $t \in (0,1)$. Thus
\begin{align*}
    k'(t)  \leq p\left(r-p+\frac{r}{t^{p+1}}-\frac{p}{t^{p+1}} \right)=p(r-p)\Big(1+\frac{1}{t^{p+1}}\Big)<0
\end{align*}
using that $r < p$ for $r <1$. This establishes the second part of \eqref{eq:k' goal}, and therefore concludes the proof of the claim \eqref{eq:H key} and of the proposition. 
\end{proof}

Lemma \ref{lem:numerical f f p p} and Proposition \ref{prop:reduction to diagonal} imply the numerical inequalities \eqref{eq:numerical r>1} and \eqref{eq:numerical r<1} for $p=q=p_r$. By Lemma \ref{lem:induction}, these imply the desired Theorems \ref{thm: f g r p p} and \ref{thm: f g r p p reverse}.

\begin{proof}[Proof of Theorem \ref{thm: f g r p p} and Theorem \ref{thm: f g r p p reverse}]
    By Lemma \ref{lem:induction} it suffices to prove the inequalities \eqref{eq:numerical r>1} and \eqref{eq:numerical r<1} for $p=q=p_r$. Note that these inequalities amount to saying that $H_r(x,y) \leq 0$ and $H_r(x,y) \geq 0$ respectively, where $H$ is as in \eqref{eq:H def}. Combining Proposition \ref{prop:reduction to diagonal} (i), and Lemma \ref{lem:numerical f f p p} (i) (with $x$ replaced by $\sqrt{xy}$) we have 
    \begin{equation*}
        H_r(x,y) \leq H_r(\sqrt{xy}, \sqrt{xy}) \leq 0
    \end{equation*}
    for $r >1$, as desired. 
    The case $r<1$ is analogous but applying parts (ii) of Proposition \ref{prop:reduction to diagonal} and Lemma \ref{lem:numerical f f p p}.

    We next turn to the sharpness of $p_r$. Assume $r>1$. As seen in the proof of Lemma \ref{lem:induction}, the inequality \eqref{goal-inequality} for $d=1$ and $p=q$ is equivalent to showing
    \begin{equation*}
    [1+(x+y)^r+(xy)^r]^{1/r}\leq (1+x^p)^{1/p}(1+y^p)^{1/p}.
    \end{equation*}
    Taking $x=y=1$, one readily obtains $\frac{2}{p} \geq \frac{\log_2(2+2^r)}{r}$, that is $p \leq p_r$. The case $r<1$ is entirely analogous.
\end{proof}

\section{The off-diagonal case}\label{sec:off-diagonal}

In this section we address Young's convolution inequalities when $p \neq q$, and provide the proofs for Propositions \ref{prop:necessary critical line} and \ref{prop:necessary critical line r<1} and Theorem \ref{thm:r=2 critical line}.

\subsection{Necessary conditions}\label{subsec:nec}

We provide a non-trivial necessary condition along the line $\frac{1}{p}+\frac{1}{q}=\frac{r}{\log_2(2+2^r)}$.

\begin{proof}[Proof of Proposition \ref{prop:necessary critical line} and Proposition \ref{prop:necessary critical line r<1}]
We first provide the proof of Proposition \ref{prop:necessary critical line}. Assume $r>1$. As seen in the proof of Lemma \ref{lem:induction}, the inequality \eqref{goal-inequality} for $d=1$ is equivalent to proving that
\begin{align*}%\label{ineq: main ineq for two functions p q}
    [1+(x+y)^r+(xy)^r]^{1/r}\leq (1+x^p)^{1/p}(1+y^q)^{1/q}
\end{align*}
for all $x,y \geq 0$. In particular, if $y=1$, this reduces to
\begin{equation}\label{ineq: y=1}
1+(x+1)^r+ x^r\leq 2^{r/q}(1+x^p)^{r/p}
\end{equation}
for all $x \geq 0$. Note that taking $x=1$ we get $\frac{1}{p}+\frac{1}{q}\geq\frac{\log_{2}(2^r+2)}{r}$. 
Assuming that $\frac{1}{p}+\frac{1}{q}=\frac{\log_{2}(2^r+2)}{r}$, we have that \eqref{ineq: y=1} is equivalent to
\begin{equation*}
 \frac{1+(x+1)^r + x^r}{2^r+2}\leq \Big(\frac{1}{2}+\frac{x^p}{2}\Big)^{r/p}   
\end{equation*}
for all $x\geq 0$. Taking logarithms, this is equivalent to proving that
\begin{equation}\label{ineq: proof of main thm 2 funciones p q con log}
g(x):=\frac{r}{p}\log\Big(\frac{1}{2}+\frac{x^p}{2}\Big)-\log[x^r+(x+1)^r+1]+\log(2^r+2)\geq 0
\end{equation}
for all $x\geq0$. Observe that $g(1)=0$ and
\begin{align*}
g'(x) & =\frac{rx^{p-1}}{1+x^p}-\frac{rx^{r-1}+r(x+1)^{r-1}}{x^r+(x+1)^r+1} \\
& =r\frac{(x+1)^{r-1}x^{p-1}+x^{p-1}-(x+1)^{r-1}-x^{r-1}}{(1+x^p)(x^r+(x+1)^r+1)}.    
\end{align*}
In order for \eqref{ineq: proof of main thm 2 funciones p q con log} to hold, that is, $g(x) \geq 0=g(1)$, we require that $g'(1-\varepsilon) \leq 0$ and $g'(1+\varepsilon) \geq 0$ for all $\varepsilon>0$ sufficiently small. Note that $g'(1)=0$; thus we require $g''(1) \geq 0$. 

Let $h$ denote the numerator of $g'/r$,  that is $$h(x):=(x+1)^{r-1}x^{p-1}+x^{p-1}-(x+1)^{r-1}-x^{r-1}$$ for all $x\geq0$. We have $h(1)=0$ and that the sign of $g''(1)$ is the same as that of $h'(1)$. By direct computation
\begin{align*}
    h'(x)&=(r-1)(x+1)^{r-2}x^{p-1}+(p-1)(x+1)^{r-1}x^{p-2}\\
    &\ \ \ \ \ +(p-1)x^{p-2}-(r-1)(x+1)^{r-2}-(r-1)x^{r-2}.
\end{align*}
Thus $h'(1)=(p-1)(2^{r-1}+1)-(r-1)$ and we have $h'(1)\geq 0$ if and only if $p\geq \frac{r+2^{r-1}}{1+2^{r-1}}$. Then
$$
\frac{\log_2 (2^r+2)}{r}=\frac{1}{q}+\frac{1}{p}\leq \frac{1+2^{r-1}}{r+2^{r-1}}+\frac{1}{q},
$$
which implies that $q\leq \frac{1}{\frac{\log_2 (2^r+2)}{r}-\frac{1+2^{r-1}}{r+2^{r-1}}}$. Interchanging the roles of $p$ and $q$ in the previous calculations, we obtain that
$\frac{r+2^{r-1}}{1+2^{r-1}}\leq p,q\leq \frac{1}{\frac{\log_2 (2^r+2)}{r}-\frac{1+2^{r-1}}{r+2^{r-1}}}$ are necessary conditions for \eqref{goal-inequality} to hold when $\frac{1}{p}+\frac{1}{q}=\frac{r}{\log_2(2+2^r)}$ and $r>1$. This concludes the proof of Proposition \ref{prop:necessary critical line}.

The proof of Proposition \ref{prop:necessary critical line r<1} is entirely analogous. The only difference is that now one requires $h'(1)\leq 0$.
\end{proof}

\begin{remark}
The proof of Theorem \ref{thm: f g r p p} consisted in a reduction to the case $f=g$. The above necessary conditions reveal that Proposition \ref{prop:reduction to diagonal} cannot hold in the $p \neq q$ case unless one assumes $p,q$ as in the statement of Proposition \ref{prop:necessary critical line}, as otherwise combined with Corollary \ref{cor:f f pqr} would imply false estimates.
\end{remark}

\subsection{The case $r=2$}

We next show that the necessary conditions on the line $\frac{1}{p} + \frac{1}{q}=\frac{\log_2(2^r +2)}{r}$ are also sufficient for $r=2$.

\begin{proof}[Proof of Theorem \ref{thm:r=2 critical line}]
It suffices to prove \eqref{ineq: lem f*g 2 p q} for $p=\frac{4}{3}$ and $q=\frac{1}{\frac{\log_2 3}{2}-\frac{1}{4}}$, which by Lemma \ref{lem:induction} reduces to
\begin{equation}\label{eq:r=2 goal}
[1+(x+y)^2+(xy)^2]^{2}\leq (1+x^{4/3})^{3}(1+y^q)^{4/q}    
\end{equation}
for all $x,y\geq 0$. Once this is established, the remaining bounds follow by interpolation of this with the equivalent inequality for $q=\frac{4}{3}$ and $p=\frac{1}{\frac{\log_2 3}{2}-\frac{1}{4}}$ (which follows from interchanging the roles of $f$ and $g$). 

We turn to the proof of \eqref{eq:r=2 goal}. We assume $y>0$, as otherwise the inequality trivially follows since $q < 2$. For a fixed $y>0$, we define $f_{y}(x):=\frac{[1+(x+y)^2+(xy)^2]^{2}}{(1+x^{4/3})^{3}}$. We observe that $f_y(0)=f_y(+\infty)=(1+y^2)^2\leq (1+y^{q})^{4/q}$ since $q \leq 2$. 
A computation shows that
\begin{equation*}
    f_y'(x)=4 \left(\frac{1+(x+y)^2+(xy)^2}{(1+x^{4/3})^4}\right)g_y(x)    
\end{equation*}
where
\begin{align*}
    g_y(x):&=(xy^2+x+y)(1+x^{4/3})-x^{1/3}\big(1+(x+y)^2+(xy)^2\big) \\
    & = xy^2+x+y-x^{4/3}y-x^{1/3}y^2-x^{1/3} \\
    & = (x^{2/3}-1)[(y^2+1)x^{1/3}-y(1+x^{2/3})] \\
    & = (x^{2/3}-1)(-y)(x^{1/3}-y)(x^{1/3}-1/y).
\end{align*}
Thus $f_y'(x)=0$ if and only if $x\in\{1,y^3,\frac{1}{y^3}\}$. We observe that $x=1$ corresponds to a local minimum of $f_y$ for $y \neq 1$. Note that the sign of $f_y''(1)$ is the same as that of $g_y'(1)$. Since
\begin{equation*}
    g_y'(x)=y^2+1-\frac{1}{3}x^{-2/3}y^2 - \frac{4}{3}x^{1/3}y - \frac{1}{3}x^{-2/3}
\end{equation*}
we have $g_y'(1)=\frac{2}{3}(y-1)^2$, and so $f_y''(1)>0$ if $y \neq 1$, which means $x=1$ is a local minimum for $y \neq 1$. Since the inequality \eqref{eq:r=2 goal} holds trivially for $x=y=1$, it suffices to show that $\max \{f_y(y^3), f_y(1/y^3)\} \leq (1+y^q)^{4/q}$ holds for all $y > 0$. But
\begin{equation}\label{eq:1/y^3 reduction}
f_y(1/y^3)=\frac{\big(1+(\frac{1}{y^3}+y)^2+\frac{1}{y^4}\big)^{2}}{(1+\frac{1}{y^4})^{3}}=\frac{(y^8+y^6+2y^4+y^2+1)^{2}}{(y^{4}+1)^{3}}=f_y(y^3) 
\end{equation}
for all $y>0$; consequently, we have reduced \eqref{eq:r=2 goal} to proving $f_y(y^3) \leq (1+y^q)^{4/q}$ for all $y>0$. Indeed, it suffices to prove it only for $0 < y \leq 1$. If $y>1$ we can change variables $y \mapsto 1/y$ and note that, similarly to \eqref{eq:1/y^3 reduction}, we have
\begin{equation*}
    y^4 f_{1/y}(1/y^3)=f_y(y^3).
\end{equation*}

Thus, it suffices to show
$$
f_y(y^3)=\frac{(y^8+y^6+2y^4+y^2+1)^{2}}{(1+y^4)^{3}}\leq (1+y^{q})^{4/q}
$$
for all $y \in (0,1]$ or, equivalently,
$$
(1+y^2+y^4)^2\leq (1+y^4)(1+y^{q})^{4/q}
$$
for all $y\in(0,1]$. Define $T(y):=\frac{4}{q}\log(1+y^q)+\log(1+y^4)-2\log(1+y^2+y^4)$
for all $y\geq0$. Observe that $T(0)=T(1)=0$. A computation shows that
\begin{equation*}
T'(y)=4\frac{y^{q-1}+y^{q+3}+y^{q+5}-y-y^3-y^7}{(1+y^q)(1+y^4)(1+y^2+y^4)}   
\end{equation*}
and therefore $T'(\varepsilon)>0$ for all $\varepsilon>0$ sufficiently small and $T'(1)=0$. To show that $T(y)\geq 0$ in $[0,1]$, it suffices to prove that $T'$ has exactly one zero in $(0,1)$. This is equivalent to proving that
$$
R(y):=\log(1+y^4+y^6)-\log[y^{2-q}(1+y^2+y^6)]
$$
has exactly one zero in $(0,1)$. We have that $R'(y)$ equals to
$$
\frac{-(2-q)y^{12}-(4-q)y^{10}+(2+q)y^8+(3q-4)y^6+(2+q)y^4-(4-q)y^2-(2-q)}{y(1+y^4+y^6)(1+y^2+y^6)}.
$$
Then, by Descartes' rule of sign change and the value of $q=\frac{1}{\frac{\log_2 3}{2} - \frac{1}{4}}$, the numerator of $R'$ has two positive zeros. Since $R'(0) = -\infty$, $R'(1)=q-4/3>0$ and  $R'(y)<0$ when $y \to +\infty$, we have that $R'$ has one root in $(0,1)$ and another root in $(1,+\infty)$. This means that $R'$ has only one sign change, from $-$ to $+$, in $(0,1)$ and since $R(0)=+\infty$ and $R(1)=0$, we conclude that $R$ has only one zero in $(0,1)$.
\end{proof}

The proof of Corollary \ref{cor:r=2} is immediate by interpolation so we omit the details. The necessary conditions have already been discussed in Section \ref{subsec:nec}.

%%%%%%%%%%%%%%%%%%%%%%%%%%%%%%%%%%%%%%%%%%%%%%%%%%%%%%%%%%%%%%%%%%%%%%%%%%%%%%%%%%%%%%%%%%%%%%%%

%                                          REFERENCES

%%%%%%%%%%%%%%%%%%%%%%%%%%%%%%%%%%%%%%%%%%%%%%%%%%%%%%%%%%%%%%%%%%%%%%%%%%%%%%%%%%%%%%%%%%%%%%%%

\bibliography{Reference}

\bibliographystyle{plain}
\end{document}